\newtheorem{thm}{Theorem}[section]
\newtheorem{cor}[thm]{Corollary}
\newtheorem{lem}[thm]{Lemma}
\newtheorem{prop}[thm]{Proposition}
\theoremstyle{definition}
\newtheorem{rem}{Remark}[section]
\theoremstyle{remark}
\def\th@plain{\upshape}
\def\rm{\mathrm}
\def\C{\mathbb{C}}
\def\R{\mathbb{R}}
\def\Z{\mathbb{Z}}
\def\A{\mathbb{A}}
\def\GL{\mathrm{GL}}
\def\SL{\mathrm{SL}}
\def\O{\mathrm{O}}
\def\SO{\mathrm{SO}}
\def\U{\mathrm{U}}
\def\M{\mathrm{M}}
\def\mF{\mathcal{F}}
\def\Sc{\mathcal{S}}
\def\rm{\mathrm}
\def\bs{\backslash}
\def\t{\,{}^t\!}
\def\d{\mathrm{diag}}
\def\det{\mathrm{det}} 
\def\Tr{\mathrm{Tr}}
\def\N{\mathrm{N}}
\def\Sf{\mathfrak{S}}
\def\ve{\varepsilon}
\def\ga{\gamma}
\def\de{\delta}
\def\la{\lambda}
\def\ka{\kappa}
\def\th{\theta}
\def\1{\mathbf{1}}
\def\0{\mathbf{0}}
\def\idots{\reflectbox{$\ddots$}}
\def\ul{\underline}
\def\wt{\widetilde}
\DeclareMathOperator{\Res}{Res}
\DeclareMathOperator{\As}{As}
\DeclareMathOperator{\Ad}{Ad}
\DeclareMathOperator{\St}{St}
\DeclareMathOperator{\Sp}{Sp}
\DeclareMathOperator{\Irr}{Irr}
\DeclareMathOperator{\ind}{ind}
\DeclareMathOperator{\Hom}{Hom}
\begin{document}
\title[QUATERNION DISTINGUISHED]{Quaternion distinguished generic representations of $\GL_{2n}$}
\author[M. SUZUKI]{Miyu Suzuki}
\address{Graduate~school~of~Mathematics, Kyoto~University,\\
 Kitashirakawa, Kyoto, 606-8502, Japan}
\email{msuzuki@math.kyoto-u.ac.jp} 

\begin{abstract}
Let $E/F$ be a quadratic extension of non-Archimedean local fields of characteristic 0.
Let $D$ be the unique quaternion division algebra over $F$ and fix an embedding of $E$ to $D$.
Then, $\GL_m(D)$ can be regarded as a subgroup of $\GL_{2m}(E)$.
Using the method of Matringe, we classify irreducible generic $\GL_m(D)$-distinguished representations of $\GL_{2m}(E)$ in terms of Zelevinsky classification.
Rewriting the classification in terms of corresponding representations of the Weil-Deligne group of $E$, we prove a sufficient condition for a generic representation in the image of the unstable base change lift from the unitary group $\U_{2m}$ to be $\GL_m(D)$-distinguished.
\end{abstract}
\maketitle

\section*{Introduction}
Let $F$ be a non-Archimedean local field of characteristic 0.
Let $G$, $H$ be algebraic groups over $F$ and suppose $H$ is a closed subgroup of $G$.
A smooth representation $\pi$ of $G(F)$ is said to be \textit{$H(F)$-distinguished} if it has a nonzero $H(F)$-invariant linear form.
This paper concentrate on the case of $(G, H)=(\Res_{E/F}\GL_{2m}, \GL_m(D))$, where $E$ is a quadratic extension of $F$, $\Res_{E/F}$ denotes the restriction of scalars and $D$ is the unique quaternion division algebra over $F$.
Given an embedding of $E$ to $D$, $H$ can be regarded as a subgroup of $G$.
We classify irreducible smooth generic $H(F)$-distinguished representations of $G(F)$ and relate it to a functorial lift from the quasi-split unitary group $\U_{2m}$.

In order to explain the background, we consider the global setting.
Let $E/F$ be a quadratic extension of number fields and $\A_E$, $\A_F$ the rings of adeles of $E$ and $F$, respectively.
Let $\pi$ be an irreducible cuspidal automorphic representation of $\GL_{2m}(\A_E)$.
Then, $\pi$ is said to be $\GL_{2m}$-distinguished if there is a cusp form $f$ in the space of $\pi$ which has a nonzero period integral over $\GL_{2m}$:
\[
\int_{Z(\A_F)\GL_{2m}(F)\bs\GL_{2m}(\A_F)}f(h)\,dh\neq0.
\]
Here, $Z$ denotes the center of $\GL_{2m}$.
Flicker and Rallis (see \cite{flicker1991distinguished}) conjectured that $\pi$ is $\GL_{2m}$-distinguished if and only if it is an unstable base change lift of a generic cuspidal automorphic representation of the quasi-split unitary group $\U_{2m}(\A_F)$.

Let $D$ be a quaternion algebra over $F$ which $E$ embedds.
Then, distinguished cuspidal representations of $\GL_{2m}(\A_E)$ with respect to $\GL_m(D)$ is defined similarly as above.
In \cite{suzuki2019quaternion}, the author defined a non-degenerate character $\th'_{\tau}$ associated to $D$ for the quasi-split unitary group and conjectured that $\pi$ is $\GL_m(D)$-distinguished if and only if it is an unstable base change lift of a $\th'_1$-generic and $\th'_{\tau}$-generic cuspidal representation of $\U_{2m}(\A_F)$.
This conjecture slightly generalizes that of Flicker and Rallis.

Now, we go back to the local setting.
Recall that $G=\Res_{E/F}\GL_{2m}$.
Let $WD_F$ be the Weil-Deligne group of $F$ and ${}^LG$, ${}^L\U_{2m}$ be the $L$-groups of $G$ and $\U_{2m}$.
For an irreducible smooth representation $\pi$ of $G(F)=\GL_{2m}(E)$, there corresponds an $L$-parameter $\tilde{\phi}\,\colon WD_F\rightarrow{}^LG$ by local Langlands correspondence for $G$ which was established by Harris, Taylor \cite{harris2001geometry} and Henniart \cite{henniart2000preuve}.
See section 1 for more details.

According to the local Langlands correspondence in a form proposed by Vogan, for an $L$-parameter $\phi'\,\colon WD_F\rightarrow{}^L\U_{2m}$ of $\U_{2m}$, we should obtain a finite set $\Pi_{\phi'}$ of irreducible smooth representations of pure inner forms of $\U_{2m}$ called the Vogan \textit{$L$-packet} of $\phi'$.
In section 2, we will review the definition of the unstable base change map $bc_\chi$ from ${}^L\U_{2m}$ to ${}^LG$.
For an $L$-parameter $\phi'$ of $\U_{2m}$, the composition $bc_\chi\circ\phi'$ is an $L$-parameter of $G$.
Hence, if we assume the local Langlands correspondence for $\U_{2m}$, we get a map from the set of irreducible smooth representations of (pure inner forms of) $\U_{2m}(F)$ to that of $G(F)$, sending the representations in $\Pi_{\phi'}$ to the representation corresponding to $bc_\chi\circ\phi'$.
We call this conjectural map the \textit{unstable base change lift}.

Many researchers studied the following local analogue of the conjecture of Flicker and Rallis:
\begin{quote}\textbf{Statement.}
Let $\pi$ be an irreducible representation of $G(F)=\GL_{2m}(E)$.
Let $\tilde{\phi}$ be the corresponding $L$-parameter.
Then, $\pi$ is $\GL_{2m}(F)$-distinguished if and only if there exists an $L$-parameter $\phi'\,\colon WD_F\rightarrow{}^L\U_{2m}$ of $\U_{2m}(F)$ so that $\tilde{\phi}=bc_\chi\circ\phi'$.
\end{quote}
The reader may see, for example \cite{anandavardhanan2005distinguished}.
Though this naive analogue is not valid in general, it is still expected to be true for a wide class of representations.
For example, this statement is true for generic representations (see \cite[Theorem 4.9]{gurevich2018two} and \cite{matringe2010distinguished}).

In this paper, we consider the local analogue of \cite[Conjecture 2]{suzuki2019quaternion} and characterize $\GL_m(D)$-distinguished generic representations of $\GL_{2m}(E)$ in terms of an unstable base change lift of a generic representation of $\U_{2m}(F)$.
The main result is as follows:

\begin{thm}
Let $\pi$ be an irreducible generic representation of $\GL_{2m}(E)$.
Let $\tilde{\phi}$ be the corresponding $L$-parameter.
If there exists an $L$-parameter $\phi'$ of $\U_{2m}(F)$ which is generic with respect to any non-degenerate characters and satisfies $\tilde{\phi}=bc_\chi\circ\phi'$, then $\pi$ is $\GL_m(D)$-distinguished.
\end{thm}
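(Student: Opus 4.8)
The plan is to reduce the statement to the classification of $\GL_m(D)$-distinguished generic representations obtained in the preceding sections, rewritten there in terms of the $2m$-dimensional representation $\tilde\phi_E:=\tilde\phi|_{WD_E}$ of the Weil--Deligne group of $E$. In that form the classification says that an irreducible generic $\pi$ is $\GL_m(D)$-distinguished exactly when $\tilde\phi_E$ admits a decomposition
\[
\tilde\phi_E\;\cong\;\Bigl(\bigoplus_{a}\psi_a\Bigr)\;\oplus\;\Bigl(\bigoplus_{b}(\mu_b\oplus\mu_b^{*})\Bigr),
\]
where $\rho\mapsto\rho^{*}$ is the conjugate-dual (the $\mathrm{Gal}(E/F)$-conjugate of the contragredient), the $\mu_b$ are arbitrary, and the $\psi_a$ are pairwise inequivalent irreducible summands of the special shape corresponding to essentially square-integrable representations distinguished by the quaternionic subgroup --- that is, conjugate-symplectic of a prescribed sub-type. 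For the present one-directional statement I only need the \emph{sufficiency} half of this equivalence: that any $\tilde\phi_E$ of the above form comes from a distinguished $\pi$. That half rests on the open $\GL_m(D)$-orbit on the relevant generalized flag variety, which makes every $\tau\times\tau^{*}$ (with $\tau$ irreducible) $\GL_m(D)$-distinguished, together with multiplicativity of distinction along parabolic induction in the quaternionic setting --- both part of the geometric analysis carried out earlier.

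First I would analyse $bc_\chi\circ\phi'$. Restricting to $WD_E$ and using the explicit form of $bc_\chi$ from Section~2, $\tilde\phi_E$ is the $2m$-dimensional conjugate-dual representation of $WD_E$ attached to $\phi'$; since $2m$ is even it is conjugate-symplectic in the normalization fixed there. Next I would translate ``$\phi'$ is generic with respect to every non-degenerate character of $\U_{2m}(F)$'' into a statement about $\tilde\phi_E$: via the local Langlands correspondence for $\U_{2m}$ as set up in Section~2, and the way the classes of non-degenerate characters --- in particular $\th'_\tau$, the one attached to $D$ --- are indexed by characters of the component group of $\phi'$, this genericity should be equivalent to the requirement that every conjugate-self-dual summand of $\tilde\phi_E$ of the sign prescribed by $\U_{2m}$ occur with multiplicity one. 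Granting this, $\tilde\phi_E$ decomposes automatically as above, the $\psi_a$ being the pairwise inequivalent conjugate-symplectic summands and the remaining summands --- those of the opposite sign, necessarily of even multiplicity, together with those not isomorphic to their own conjugate-dual --- pairing off into the $\mu_b\oplus\mu_b^{*}$.

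Finally I would match the two decompositions. Writing a conjugate-symplectic multiplicity-one summand as $\rho_a\boxtimes\Sp(k_a)$ with $\rho_a$ irreducible conjugate-self-dual and $\Sp(k_a)$ the $k_a$-dimensional representation of the Deligne $\SL_2$, the point to verify is that this is precisely one of the two cases of the discrete-series distinction criterion established earlier --- $k_a$ odd with $\rho_a$ conjugate-symplectic (cuspidal part $\GL(D)$-distinguished), or $k_a$ even with $\rho_a$ conjugate-orthogonal --- so that the corresponding essentially square-integrable representation is distinguished by the quaternionic subgroup; the character $\th'_\tau$ is what accounts for the parity bookkeeping in the second case. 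Since the $\mu_b\oplus\mu_b^{*}$ are of the paired type, $\tilde\phi_E$ then has exactly the shape demanded by the classification, and the sufficiency half recalled above yields the distinction of $\pi$. The hard part will be the translation in the second paragraph --- extracting the multiplicity-one-and-correct-sign statement from genericity with respect to \emph{all} non-degenerate characters --- together with the matching of the several Whittaker data of the even quasi-split unitary group, $\th'_\tau$ included, against the two cases of the discrete-series criterion; a subsidiary point is to confirm that the geometric facts used in the sufficiency half hold for the specific embedding $\GL_m(D)\hookrightarrow\GL_{2m}(E)$ fixed in the paper.
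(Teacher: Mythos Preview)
Your overall strategy---translate the hypothesis on $\phi'$ into a structural condition on the $WD_E$-representation $\ul\phi$, then feed the resulting decomposition into the classification theorem for $H$-distinguished generic representations---is exactly the paper's approach. But two of your translations are wrong, and the second one is where the actual content of the hypothesis lives.

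\textbf{Sign error.} You claim that $\tilde\phi_E$ is conjugate-\emph{symplectic} and that the discrete series distinguished by $\GL_m(D)$ are the conjugate-symplectic irreducibles. In the paper's normalisation this is inverted: Lemma~\ref{lem:bclift} says $\tilde\phi=bc_\chi\circ\phi'$ forces $\ul\phi$ to be conjugate-\emph{orthogonal}, precisely because the unstable map $bc_\chi$ carries the $\chi$-twist (see the remark at the end of that proof). Likewise the lemma preceding the main theorem, together with Remark~\ref{rem:sq-int}, identifies the $H$-distinguished essentially square-integrable $\Delta$ with those whose parameter is conjugate-orthogonal, i.e.\ $l$ odd and $\rho$ conjugate-orthogonal, or $l$ even and $\rho$ conjugate-symplectic---the opposite of your two cases. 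Your error is systematic, so your argument would survive a global sign flip, but as written it is incorrect.

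\textbf{The meaning of ``generic with respect to any non-degenerate character''.} You read this as a multiplicity-one condition on the summands in $I_+$. It is not. By \S\ref{subsection:L-packet}(2) the condition is that the quadratic character $\eta$ of $A_M$ is trivial, and by \S1.1 this happens exactly when every irreducible $M_i$ with $i\in I_+$ has \emph{even dimension}. Multiplicity plays no role. This is the crux of the proof: in the decomposition of $\ul\phi$, the summands with $i\in I_0$ and $i\in I_-$ pair off automatically into $\Delta\oplus(\Delta^c)^\vee$ blocks (the latter because the multiplicity space carries a symplectic form, hence has even dimension), so they satisfy condition~(1) of Theorem~\ref{thm:classificationH-distinguished}. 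The summands with $i\in I_+$ are conjugate-orthogonal, hence $\GL_{n_i}(F)$-distinguished by Remark~\ref{rem:sq-int}; but to invoke Beuzart-Plessis and upgrade this to $H_{m_i}$-distinction one needs $n_i=2m_i$ to be \emph{even}, for otherwise there is no $\GL_{m_i}(D)$ inside $\GL_{n_i}(E)$ at all. That evenness is exactly what $\eta=1$ supplies. Your multiplicity-one reading neither implies this nor is implied by it, and without the even-dimension input the reduction to Theorem~\ref{thm:classificationH-distinguished} fails.
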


Unlike the global conjecture, the converse implication does not hold for local representations.
In order to get more precise description of $H(F)$-distinguished representations, we need to generalize the conjecture of Prasad.
For a connected reductive quasi-split group $G$ over $F$, Prasad \cite{prasad2015relative} introduced a quasi-split group $H^{\rm{op}}$ over $F$ which is isomorphic to $H$ over $E$.
He formulated a conjectural formula \cite[Conjecture 2]{prasad2015relative} for the dimension of the space of $H(F)$-invariant linear forms on irreducible smooth representations of $H(E)$ in terms of the degree of the base change map from $H^{\rm{op}}(F)$ to $H(E)$.
In particular, he conjectured that an $H(F)$-distinguished representation of $H(E)$ arises from base change from $H^{\rm{op}}$.
For this conjecture, several low dimensional examples are known; see \cite{lu2018prasad} and \cite{lu2019prasad}.
A recent remarkable work of Beuzart-Plessis \cite{beuzart2017distinguished} established an integral formula computing the dimension of the $H(F)$-invariant linear forms for square-integrable representations.
As a corollary, he proved that this dimension is preserved under the Jacquet-Langlands correspondence.
This result verifies a consequence of the conjecture of Prasad and plays an important role in the proof of our main theorem.

In our setting, $H=\GL_m(D)$ is a non quasi-split inner form of $H'=\GL_{2m}$ and they are isomorphic over $E$.
It is natural to ask whether the dimension of the space of $H(F)$-invariant linear forms on representations of $H(E)$ can be described in terms of the base change from $H'^{\rm{op}}$. 
We hope that the result of this paper will be reinterpreted in relative Langlands frameworks in the sense of Prasad, generalized to non quasi-split groups.

We briefly explain the contents of this paper.
In section 1, we fix necessary notations and recall some facts about representations of Weil-Deligne groups.
In section 2, we review the conjectural local Langlands correspondence and known results.
In section 3, we quote some results of Matringe about double coset decomposition of $G$.
In section 4, we summarize the Zelevinsky classification of representations of $G(F)$.
In section 5, using the method of Matringe \cite{matringe2010distinguished}, we classify $H$-distinguished generic representations of $G(F)$ in terms of Zelevinsky classification.
Rewriting that classification in terms of representations of $WD_E$, we obtain the main theorem in the last section.

\section{Preliminaries}
Let $E/F$ be a quadratic extension of non-Archimedean local fields of characteristic 0.
Fix a nontrivial additive character $\psi$ of $F$.
Define the character $\psi_E$ of $E$ by
\[
\psi_E(x)=\psi\left(\frac12\Tr_{E/F}(x)\right), \hspace{10pt} x\in E.
\]
Denote the action of the nontrivial element of the Galois group $\rm{Gal}(E/F)$ by $x\mapsto x^c$.
We consider various objects on which $\rm{Gal}(E/F)$ acts.
By abuse of notation, we write all of these actions by the same symbol ${\cdot}^c$. 
Denote by $\N_{E/F}$ (resp. $\Tr_{E/F}$) the norm map (resp. trace map) from $E$ to $F$.
Let $\omega=\omega_{E/F}$ be the unique nontrivial character of $F^{\times}/\N_{E/F}(E^{\times})\cong\rm{Gal}(E/F)$.
Sometimes we regard it as a quadratic character of $F^{\times}$.

Let $W_F$ be the Weil group of $F$.
Recall that this is a subgroup of the absolute Galois group of $F$ whose Abelianization is isomorphic to $F^{\times}$.
Hence, $\omega$ can be regarded as a character of $W_F$.
Define the Weil-Deligne group $WD_F$ of $F$ by $WD_F=W_F\times\SL_2(\C)$.
Similarly, we denote by $W_E$ and $WD_E$ the Weil group and Weil-Deligne group of $E$.
Note that $W_E$ is a subgroup of $W_F$ of index $2$ and the quotient $W_F/W_E$ is naturally identified with $\rm{Gal}(E/F)$.
We fix an element $s$ of $W_F$ outside $W_E$ once for all.
The image of $s$ in $W_F/W_E$ is the nontrivial element of $\rm{Gal}(E/F)$.

\subsection{Representations of $WD_E$}
Let $M$ be a finite dimensional complex vector space.
A homomorphism $\ul{\phi}\,\colon WD_E\rightarrow\GL(M)$ is called a representation of $WD_E$ if
\begin{itemize}
\item the image of a geometric Frobenius element of $W_E$ is semisimple;
\item the restriction of $\ul{\phi}$ to $W_E$ is smooth;
\item the restriction of $\ul{\phi}$ to $\SL_2(\C)$ is algebraic.
\end{itemize}
For an $L$-parameter $\phi\,\colon WD_E\rightarrow\GL_n(\C)\times W_E$, we write by $\ul{\phi}\,\colon WD_E\rightarrow\GL(M)$ the associated representation of $WD_E$.
Here, $M$ is a complex vector space of dimension $n$.
We recall some facts about conjugate self-dual representations of $WD_E$ from \cite[Section 4]{gan2011symplectic}.
We say that the representation $M$ of $WD_E$ is \textit{conjugate self-dual} if there exists a non-degenerate bilinear form $B\,\colon M\times M\rightarrow\C$ which satisfies
\[\left\{\renewcommand{\arraystretch}{1.3}\begin{array}{l}
B(\tau m, s\tau s^{-1}n)=B(m, n) \\
B(n, m)=b\cdot B(m, s^2 n)
\end{array}\right. \hspace{10pt} \tau\in WD_E,\ m ,n\in M
\]
with some $b\in\{\pm1\}$.
We call $b$ the \textit{sign} of $B$.
If $b=+1$, $M$ is called \textit{conjugate-orthogonal} and if $b=-1$, $M$ is called \textit{conjugate-symplectic}.
For a conjugate self-dual representation $(\ul{\phi}, M)$ of $WD_E$, we have a decomposition
\begin{equation}\label{eq:WDdecomp}
M=\bigoplus_{i\in I_+}V_i\otimes M_i+\bigoplus_{i\in I_-}V_i\otimes M_i+\bigoplus_{i\in I_0}V_i\otimes(M_i+(M_i^c)^{\vee}).
\end{equation}
Here, each $M_i$ is an irreducible representation of $WD_E$ and $V_i$ is a space of multiplicity satisfying
\begin{itemize}
\item for $i\in I_+$, $M_i$ is conjugate self-dual of sign $b$;
\item for $i\in I_-$, $M_i$ is conjugate self-dual of sign$-b$;
\item for $i\in I_0$, $M_i$ is not conjugate self-dual.
\end{itemize}
The restriction of the form $B$ to each summand of the above decomposition induces a non-degenerate bilinear form on each $V_i$.
These pairings have sign $+1$ for $i\in I_+$ and $-1$ for $i\in I_-$.
Let $C=C(M, B)$ be the subgroup of $\rm{Aut}(M, B)\subset\GL(M)$ which centralizes the image of $WD_E$.
Due to the above decomposition, we have
\[
C\cong\prod_{i\in I_+}\O(V_i)\times\prod_{i\in I_-}\Sp(V_i)\times\prod_{i\in I_0}\GL(V_i).
\]
Thus, the component group of $C$ is
\[
A=A_M\cong(\Z/2\Z)^k,
\]
where $k$ is the order of $I_+$.

For a semisimple element $a$ in $C$, set 
\[
M^a=\{m\in M\, ;\, am=-m\}.
\]
Note that the parity of the dimension of $M^a$ only depends on the image of $a$ in $A$.
We define the quadratic character $\eta$ of $A$ by
\[
\eta(a)=(-1)^{\dim\,M^a}, \hspace{10pt} a\in A.
\]
In particular, $\eta$ is the trivial character if and only if $\dim\,M_i$ is even for all $i\in I_+$.

\subsection{Groups and representations}
Throughout this paper, for an algebraic group $G$ over $F$, we write the group of $F$-rational points by $G$.
Denote by $Z_G$ or simply by $Z$ the center of $G$. 

A representation of $G$ always means an admissible representation of $G$.
For a representation $(\pi, V)$ of $G$, denote its contragredient \textit{i.e.} the smooth dual by $(\pi^{\vee}, V^{\vee})$.
For a closed subgroup $H$ of $G$ and its left Haar measure $dh$, there exists a continuous character $\de_H\,\colon H\rightarrow\R_{>0}$ which satisfies
\[
d(hh')=\de_H(h')^{-1}dh
\]
for all $h'\in H$.
We call this the modulus character of $H$.
For a representation $(\sigma, W)$ of $H$, denote by $I_H^GW$ the space of locally constant functions $f\,\colon G\rightarrow W$ satisfying
\[
f(hg)=\de_H^{1/2}(h)\sigma(h)f(g)
\]
for all $h\in H$ and $g\in G$.
The representation of $G$ on this space given by right translation is called the normalized induced representation from $(\sigma, W)$ and denoted by $(I_H^G\sigma, I_H^GW)$.

Similarly, let $\ind_H^GW$ be the space of locally constant functions $f\,\colon G\rightarrow W$ satisfying
\[
f(hg)=\sigma(h)f(g)
\]
for all $h\in H$ and $g\in G$ and its support is compact modulo $H$.
The representation of $G$ on this space given by right translation is called (unnormalized) compactly induced representation from $(\sigma, W)$ and denoted by $(\ind_H^G\sigma, \ind_H^GW)$.

Write the trivial representation of $H$ by $\1_H$ or simply by $\1$.
For a character $\chi$ of $H$, we say that a representation $\pi$ of $G$ is $(H, \chi)$-\textit{distinguished} if the space $\Hom_H(\pi, \chi)$ has a nontrivial element.
If $\chi$ is the trivial character, $(H, \chi)$-distinguished representations are simply called $H$-\textit{distinguished}.

Suppose that $G$ is a quasi-split reductive group.
Take a Borel subgroup $B$ of $G$ defined over $F$.
Let $T$ be a maximal $F$-subtorus of $G$ contained in $B$ and $N$ the unipotent radical of $B$ so that we have $B=T\ltimes N$.
For a character $\th$ of $N$ and an element $t$ of $T$, we define a character $t\cdot\th$ of $N$ by 
\[
t\cdot\th(n)=\th(t^{-1}nt), \hspace{10pt} n\in N.
\]
This defines an action of $T$ on the set of characters of $N$.
A character $\th$ of $N$ is called non-degenerate or generic if its stabilizer in $T$ is $Z_G$.
The set of non-degenerate characters of $N$ is stable under the action of $T$.
Suppose that $\th$ is a non-degenerate character of $N$.
A representation $\pi$ of $G$ is called $\th$-generic if it is $(N, \th)$-distinguished.
This only depends on the $T$-orbit of $\th$.

\section{Local Langlands correspondence}

\subsection{General linear groups and Asai representations}
Set $G=G_n=\GL_n(E)$.
This can be viewed as a group of $F$-rational points of $\Res_{E/F}\GL_n$.
Here, $\Res_{E/F}$ stands for the restriction of scalars.
The finite symmetric group $\Sf_n$ is identified with the subgroup of $G$ consisting of permutation matrices.

For a partition $\la=(n_1, \ldots, n_r)$ of $n$, let $P_{\la}$ be the parabolic subgroup of $G$ consisting of matrices of the form
\[\renewcommand{\arraystretch}{1.3}
\left(\begin{array}{ccc}
g_1&\ast   &\ast \\
     &\ddots&\ast \\
     &         &g_r
\end{array}\right)\in G, \hspace{10pt} g_i\in G_{n_i}.
\]
A subgroup of this form is called a standard parabolic subgroup.
Let $M_{\la}$ be the Levi subgroup of $P_{\la}$ consisting of matrices of the form $\d(g_1, \ldots, g_r)$ with $g_i\in G_{n_i}$.
A subgroup of this form is called a standard Levi subgroup.
We write the unipotent radical of $P_{\la}$ by $N_{\la}$.
A parabolic subgroup of $M_{\la}$ is called standard if it is a product of standard parabolic subgroups of each component $G_{n_i}$.

The standard parabolic subgroup corresponding to the partition $(1, \ldots, 1)$ is a Borel subgroup of $G$ and denoted by $B$.
The standard Levi subgroup of $B$ is a maximal torus of $G$ and denoted by $T$.
Denote by $N$ the unipotent radical of $B$.

Let $\la=(n_1, \ldots, n_r)$ be a partition of $n$ and $(\sigma_i, V_i)$ a representation of $G_{n_i}$ for each $i$.
Set $(\sigma, V)=(\sigma_1\boxtimes\cdots\boxtimes\sigma_r, V_1\otimes\cdots\otimes V_r)$.
This is a representation of $M_{\la}$.
We extend $\sigma$ to a representation of $P_{\la}$ so that $N_{\la}$ acts trivially on $V$.
We write the normalized induced representation $I_{P_{\la}}^G\sigma$ by $\sigma_1\times\cdots\times\sigma_r$.

Let $P'=M'N'$ be a standard parabolic subgroup of $M_{\la}$.
We denote by $V(P')$ the subspace of $V$ spanned by elements of the form $\sigma(n')v-v$ with $v\in V$ and $n'\in N'$.
Set $V_{P'}=V/V(P')$ and we call this the Jacquet module of $V$ with respect to $P'$.
We write the natural projection $V\rightarrow V_{P'}$ by $j_{P'}$ and define the representation $\sigma_{P'}$ of $M'$ on $V_{P'}$ by
\[
\sigma_{P'}(m)j_{P'}(v)=\de_{P'}^{-1/2}(m')j_{P'}(\sigma(m')v), \hspace{10pt} v\in V,\ m'\in M'.
\]

Let $\th$ be the non-degenerate character of $N$ given by
\[
\th(u)=\psi_E\left(\sum_{i=1}^{n-1}u_{i, i+1}\right), \hspace{10pt} u=(u_{i, i+1})\in N.
\]
There is only one $T$-orbit of non-degenerate character of $N$.
Thus, we simply say a representation of $G$ is generic if it is $\th$-generic.

If we regard $G$ as a group over $E$, its $L$-group is given by ${}^LG_{/E}=\GL_n(\C)\times W_E$.
On the other hand, if we regard it as a group over $F$, its $L$-group is given by ${}^LG_{/F}=(\GL_n(\C)\times\GL_n(\C))\rtimes W_F$.
Here, the action of $W_F$ on $\GL_n(\C)\times\GL_n(\C)$ factors through $W_F/W_E\cong\mathrm{Gal}(E/F)$ and the action of $s$ is the permutation of the first and second components.
For an $L$-parameter $\phi\,\colon WD_E\rightarrow{}^LG_{/E}$, we define the corresponding $L$-parameter $\tilde{\phi}\,\colon WD_F\rightarrow{}^LG_{/F}$ by
\[\renewcommand{\arraystretch}{1.3}
\begin{array}{l}
\tilde{\phi}(x)=(\phi_1(x), \phi_1(sxs^{-1}))\rtimes\phi_2(x), \hspace{10pt} x\in WD_E \\
\tilde{\phi}((s, 1))=(1, \phi_1(s^2))\rtimes s.
\end{array}
\]
Here, each $\phi_i$ is the composition of $\phi$ with the $i$-th projection of ${}^LG_{/E}$.
The equivalence class of $\tilde{\phi}$ does not depend on the choice of $s$ and this construction provides a bijection between two sets of equivalence classes of $L$-parameters.
If we simply say an $L$-parameter of $G$, it means an $L$-parameter which takes its values in ${}^LG_{/E}$.

By Harris, Taylor \cite{harris2001geometry} and Henniart \cite{henniart2000preuve}, the local Langlands conjecture for $G$ was established.
Thus there is a bijection between the set of equivalence classes of $L$-parameters of $G$ and the set of irreducible representations of $G(F)$. 

We define a complex representation $\As^+$ of ${}^LG_{/F}$ on $\C^n\otimes\C^n$ as follows: for $a\otimes b\in\C^n\otimes\C^n$,
\[\renewcommand{\arraystretch}{1.3}
\begin{array}{l}
\As^+((g, h)\rtimes x)(a\otimes b)=(ga)\otimes(hb), \hspace{10pt} (g, h)\in\GL_n(\C)\times\GL_n(\C), \ x\in W_E \\
\As^+((1, 1)\rtimes s)(a\otimes b)=b\otimes a.
\end{array}
\]
Similarly, we define a representation $\As^-$ of ${}^LG_{/F}$ on the same space as follows: for $a\otimes b\in\C^n\otimes\C^n$,
\[\renewcommand{\arraystretch}{1.3}
\begin{array}{l}
\As^-((g, h)\rtimes x)(a\otimes b)=(ga)\otimes(hb), \hspace{10pt} (g, h)\in\GL_n(\C)\times\GL_n(\C), \ x\in W_E \\
\As^-((1, 1)\rtimes s)(a\otimes b)=-b\otimes a.
\end{array}
\]
In hornor of T. Asai, we call these representations \textit{Asai representations}.

According to \cite[Proposition 7.4]{gan2011symplectic}, if we identify $\C^n\otimes\C^n$ with the space $\M_n(\C)$ of $n$ by $n$ matrices, the stabilizer in $\As^{(-1)^{n-1}}$ of a vector corresponding to an invertible matrix is isomorphic to the $L$-group of the unitary group $\U_n$.
We say that an $L$-parameter $\tilde{\phi}\,\colon WD_F\rightarrow{}^LG_{/F}$ \textit{fixes a non-degenerate element of} $\As^{\pm}$ if the fixed point subspace of the image of $\As^{\pm}\circ\,\tilde{\phi}$ contains an invertible matrix.

\subsection{Even unitary groups and unstable base change lift}
From now on, we assume that $n=2m$ is even.
Let $G'=G'_m$ be the quasi-split even unitary group of rank $m$ defined by
\[
G'=\{g\in G\,;\, \t g^cJg=J\}.
\]
Here, 
\[
J=J_n=\left(\begin{array}{cc}
    &w \\
-w&    \\
\end{array}\right), \hspace{10pt} w=w_m=\left(\begin{array}{ccc}
   &          &1 \\
  &\idots&   \\
1&          &   \\
\end{array}\right)\in\GL_m.
\]
Let $B'$ denote the Borel subgroup of $G'$ consisting of upper triangular matrices.
Denote by $T'$ the maximal torus in $B'$ consisting of diagonal matrices and by $N'$ the unipotent radical of $B'$.

There are two $T'$-orbits of non-degenerate characters of $N'$.
We take representatives $\th'_{\ka}$ of these orbits given by
\[
\th'_{\ka}(u)=\psi_E\left(\sum_{i=1}^{m-1}u_{i, i+1}+\ka u_{m, m+1}\right), \hspace{10pt} u=(u_{i, j})\in N'.
\]
Here, $\ka$ runs over a set of representatives of $F^{\times}/\N_{E/F}(E^{\times})$ (cf. \cite[Proposition 12.1]{gan2011symplectic}).
We denote simply by $\th'$ the character corresponding to the element of $F^{\times}/\N_{E/F}(E^{\times})$.

The $L$-group of $G'$ is given by ${}^LG'=\GL_n(\C)\rtimes W_F$.
Here, the action of $W_F$ factors through $W_F/W_E\cong\rm{Gal}(E/F)$ and the action of $s$ is given by
\[
sgs^{-1}=J\t g^{-1}J^{-1}, \hspace{10pt} g\in\GL_n(\C).
\]
Take a character $\chi$ of $E^{\times}$ whose restriction to $F^{\times}$ coincides with $\omega$ and regard it as a character of $W_E$ as before.
Note that we have 
\[
\chi(sxs^{-1})=\chi(x)^{-1}
\]
for $x\in W_E$ and 
\[
\chi(s^2)=-1.
\]
We define the embedding of $L$-groups
\[
bc_{\chi}\,\colon{}^LG'\rightarrow{}^LG_{/F}
\]
as follows:
\[\renewcommand{\arraystretch}{1.5}
\begin{array}{ll}
bc_{\chi}(g\rtimes1)=(g,\ J\,\t g^{-1}J^{-1})\rtimes1, &g\in\GL_{n}(\C); \\
bc_{\chi}(\1_{n}\rtimes x)=(\chi(x)\cdot\1_{n},\ \chi(x)^{-1}\cdot\1_{n})\rtimes x, &x\in W_E;\\
bc_{\chi}(\1_{n}\rtimes s)=(-\1_n, \1_n)\rtimes s. &
\end{array}\]
For an irreducible representation $\pi$ of $G(F)$, let $\tilde{\phi}$ be the corresponding $L$-parameter.
We say that $\pi$ arises from \textit{unstable base change lift} of an $L$-parameter of $G'$ if there exists an $L$-parameter $\phi'$ of $G'$ which satisfies $\tilde{\phi}=bc_\chi\circ\phi'$.

One can define another embedding $bc_{\1}$ by 
\[\renewcommand{\arraystretch}{1.5}
\begin{array}{ll}
bc_{\1}(g\rtimes1)=(g,\ J\,\t g^{-1}J^{-1})\rtimes1, &g\in\GL_{n}(\C); \\
bc_{\1}(\1_{n}\rtimes x)=(\1_{n},\ \1_{n})\rtimes x, &x\in W_E;\\
bc_{\1}(\1_{n}\rtimes s)=(\1_n, \1_n)\rtimes s. &
\end{array}\]

The next result is well-known.
\begin{lem}\label{lem:bclift}
For an $L$-parameter $\phi$ of $G$, the following conditions are equivalent:
\begin{itemize}
\item[(1)] $\tilde{\phi}$ fixes a non-degenerate element of $\As^+$;
\item[(2)] there is an $L$-parameter $\phi'$ of $G'$ which satisfies $\tilde{\phi}= bc_{\chi}\circ\phi'$;
\item[(3)] $\ul{\phi}$ is conjugate-orthogonal.
\end{itemize}
Moreover, if $\phi$ satisfies these conditions, we have $L(s, \Ad\circ\,\phi')=L(s, \As^+\circ\,\tilde{\phi})$ with $\phi'$ as in (2).
\end{lem}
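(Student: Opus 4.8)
The plan is to prove \lemref{lem:bclift} by translating each condition into a statement about the $n$-dimensional representation $\ul{\phi}$ of $WD_E$ and about the $2n$-dimensional representation $\As^+\circ\tilde\phi$ of $WD_F$, and then invoking standard facts from \cite[Section 4, 7]{gan2011symplectic}. First I would record the key computation: for an $L$-parameter $\phi$ of $G$ with associated $WD_E$-representation $\ul\phi$, the composite $\As^+\circ\tilde\phi$ is the representation of $WD_F$ obtained by \emph{multiplicative} (Asai) induction of $\ul\phi$ from $WD_E$ to $WD_F$, up to the sign twist distinguishing $\As^+$ from $\As^-$. This is essentially the content of the definition of $\As^{\pm}$ together with the formula for $\tilde\phi$ in terms of $\phi_1,\phi_2$; one just checks that a vector $v\in \C^n\otimes\C^n\cong \M_n(\C)$ is fixed by $\As^+(\tilde\phi(x))$ for all $x\in W_E$ iff $v$ intertwines $\phi_1$ with $(\phi_1)^c{}^{-\vee}$ in the appropriate sense, and that the action of $s$ then imposes the conjugate-orthogonal (rather than conjugate-symplectic) symmetry on $v$ viewed as a bilinear form on $M$.

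The equivalence $(1)\Leftrightarrow(3)$ is then immediate: by the matrix identification, $\As^{(-1)^{n-1}}\circ\tilde\phi$ fixing an invertible matrix is exactly the statement that $\ul\phi$ carries a non-degenerate bilinear form $B$ with $B(\tau m, s\tau s^{-1}n)=B(m,n)$ and sign $b$, where $b=+1$ precisely for $\As^+$ when $n$ is odd and for $\As^-$ when $n$ is even — but here we are always looking at $\As^+$, so one must be slightly careful: $\As^+$ fixing a non-degenerate element is equivalent to $\ul\phi$ being conjugate-orthogonal regardless of parity, because the sign of the form $B$ on $M$ is read off from the $s$-action in $\As^+$, which is the \emph{untwisted} swap $a\otimes b\mapsto b\otimes a$, forcing a symmetric (conjugate-orthogonal) form. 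This is exactly \cite[Proposition 7.4]{gan2011symplectic} (combined with the discussion preceding it), so I would cite that. For $(2)\Leftrightarrow(3)$: given $\phi'$ with $\tilde\phi=bc_\chi\circ\phi'$, the explicit formula for $bc_\chi$ shows that $\ul\phi=\ul{\phi'}|_{WD_E}$ where $\ul{\phi'}$ is the $n$-dimensional representation of $WD_F$ underlying $\phi'$; since $G'$ is the even unitary group $\U_n$ with the relevant self-duality, $\ul{\phi'}$ is a representation of $WD_F$ which, restricted to $WD_E$, is conjugate-orthogonal — this is precisely the characterization of the image of base change from $\U_n$, again \cite[Section 8]{gan2011symplectic}. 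Conversely, a conjugate-orthogonal $\ul\phi$ extends to such a $\phi'$; the twist by $\chi$ with $\chi|_{F^\times}=\omega$ and $\chi(s^2)=-1$ is exactly what is needed to match the $\mathrm{GL}_n(\C)\rtimes W_F$ structure of ${}^LG'$ with the self-dual extension, and one checks the cocycle condition using $\chi(sxs^{-1})=\chi(x)^{-1}$.

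For the final $L$-function identity $L(s,\Ad\circ\phi')=L(s,\As^+\circ\tilde\phi)$, I would argue as follows. The adjoint representation $\Ad$ of ${}^LG'={}^L\U_n$ acts on $\mathfrak{gl}_n(\C)$, and composing with $bc_\chi$ and restricting to $WD_E$ identifies $\Ad\circ\phi'|_{WD_E}$ with $\ul\phi\otimes\ul\phi^{\vee}$; the full $WD_F$-module $\Ad\circ\phi'$ is then the $(-1)$-eigenspace-graded version, i.e.\ $\As^+\circ\tilde\phi$ together with its ``$\As^-$'' complement inside $\ul\phi\otimes\ul\phi^c{}^{\vee}$ — but the point is that $\Ad$ for the unitary group picks out exactly the conjugate-orthogonal piece, which is $\As^+\circ\tilde\phi$. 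More precisely, $\mathrm{As}^+\circ\tilde\phi$ as a $WD_F$-representation has the same restriction to $WD_E$ as $\Ad\circ\phi'$ does — namely $\ul\phi\otimes\ul\phi^\vee$ minus/plus the appropriate trace part depending on whether one uses $\mathfrak{gl}$ or $\mathfrak{sl}$, but for $L$-factors the whole matter reduces to comparing $WD_F$-representations that agree on the index-two subgroup $WD_E$ and have matching $s$-action, hence are isomorphic. I would make this precise by writing both sides explicitly via the decomposition \eqref{eq:WDdecomp} and Shahidi's description of $\As^+$, then citing \cite[Section 7]{gan2011symplectic}. The main obstacle I anticipate is bookkeeping the sign conventions — getting the parity in $\As^{(-1)^{n-1}}$ versus the fixed choice of $\As^+$ to line up with the conjugate-orthogonal condition, and tracking where the character $\chi$ (with $\chi(s^2)=-1$) enters so that it does \emph{not} flip the sign of the form — rather than any conceptual difficulty; all the structural input is already in \cite{gan2011symplectic}.
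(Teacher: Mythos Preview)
Your approach is essentially the paper's: the lemma is a bookkeeping exercise in which every substantive assertion is a citation to \cite{gan2011symplectic}, and you correctly identify Sections~7--8 there as the source. Two points of cleanup are worth noting. First, you have the citations crossed: the equivalence $(1)\Leftrightarrow(3)$ is \cite[Proposition~7.5]{gan2011symplectic} (not~7.4), while the $L$-function identity $L(s,\Ad\circ\phi')=L(s,\As^+\circ\tilde\phi)$ is \cite[Proposition~7.4]{gan2011symplectic}; for $(1)\Leftrightarrow(2)$ the paper cites \cite[Corollary~8.2]{gan2011symplectic}. Second, your discussion of the $L$-function identity is muddled: $\Ad\circ\phi'$ and $\As^+\circ\tilde\phi$ are both $n^2$-dimensional $WD_F$-representations, so there is no ``$\As^-$ complement'' to split off and no trace part to worry about --- they are simply isomorphic, which is exactly what \cite[Proposition~7.4]{gan2011symplectic} records. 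Finally, the one genuine subtlety you flag but do not resolve is the sign: because the paper uses $bc_\chi$ rather than $bc_{\1}$, the parity conventions are shifted from those literally stated in \cite{gan2011symplectic} (where factoring through $bc_{\1}$ corresponds to conjugate-orthogonal in odd rank and conjugate-symplectic in even rank); the twist by $\chi$, with $\chi(s^2)=-1$, flips this, and that is why condition~(3) reads ``conjugate-orthogonal'' here for $n$ even. The paper notes this explicitly after the proof; you should too.
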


\begin{rem}
Here, $L(s, \Ad\circ\,\phi')$ and $L(s, \As^+\circ\,\tilde{\phi})$ are $L$-functions associated with representations of Weil-Deligne groups.
For their definition, see \cite[Section 2]{gross2010arithmetic}.
\end{rem}

\begin{proof}
Equivalence of (1) and (2) is \cite[Corollary 8.2]{gan2011symplectic}.
Equivalence of (1) and (3) is \cite[Proposition 7.5]{gan2011symplectic}.
See also \cite[Lemma 2.2.1]{mok2015endoscopic}.
The last assertion is \cite[Proposition 7.4]{gan2011symplectic}.

Note that we use the embedding $bc_{\chi}$ to associate a representation of $WD_E$ with an $L$-parameter of $G'$ instead of $bc_{\1}$.
This is why the conditions above are different from those in \cite{gan2011symplectic}.
\end{proof}

\subsection{$L$-packet}\label{subsection:L-packet}
In this subsection, we describe the desiderata for the local Langlands correspondence for $G'$.
For known results and precise conjectures, see \cite{mok2015endoscopic} and \cite{gan2011symplectic}.

For an $L$-parameter $\phi'$ of $G'$, set $\tilde{\phi}=bc_{\chi}\,\circ\phi'$.
Let $\phi$ be the $L$-parameter of $G$ corresponding to $\tilde{\phi}$ and $\ul{\phi}\,\colon WD_E\rightarrow\GL(M)$ be the representation of $WD_E$ attached to $\phi$.

For each $L$-parameter $\phi'$, it is conjectured that there exists a finite set $\Pi_{\phi'}$ of equivalence classes of irreducible representations of pure inner forms of $G'$ which satisfies following conditions:
\begin{itemize}
\item[(1)] The order of $\Pi_{\phi'}$ is equal to the number of irreducible representations of the finite group $A_M$, hence the order of $A_M$.
Moreover, a generic character $\th'$ of $N'$ determines a bijection
\[
J(\th')\,\colon \Pi_{\phi'}\rightarrow\Irr(A_M).
\]
Here, $\Irr(A_M)$ is the set of irreducible representations of $A_M$.

\item[(2)] The number of $\th'$-generic representations contained in $\Pi_{\phi'}$ is at most 1.
The finite set $\Pi_{\phi'}$ contains a $\th'$-generic representation if and only if $L(s, \Ad\circ\,\phi')$ is holomorphic at $s=1$.
This part is called the conjecture of Gross-Prasad and Rallis.
If this is the case, we say that $\phi'$ or $\Pi_{\phi'}$ is generic.
If $\Pi_{\phi'}$ is generic, the $\th'$-generic representation in $\Pi_{\phi'}$ corresponds to the trivial representation of $A_M$ via the map $J(\th')$.
In addition, the $\th'_{\tau}$-generic representation in $\Pi_{\phi'}$ corresponds to the quadratic character $\eta$ defined in section 1.1.
Here, $\tau$ denotes a representative of the nontrivial element of $F^{\times}/\N_{E/F}(E^{\times})$.
We say that $\phi'$ is \textit{generic with respect to any non-degenerate characters} if $\phi'$ is generic and $\eta$ is trivial.

\item[(3)] An element of $\Pi_{\phi'}$ corresponding to a character $\mu$ of $A_M$ via the map $J(\th')$ is a representation of $G'$ if and only if $\mu(-1)=+1$.
\end{itemize}
The finite set $\Pi_{\phi'}$ is called the $L$-\textit{packet} corresponding to $\phi'$.

\section{Quaternion algebra and symmetric subgroup}
\subsection{Definitions}
Denote the unique quaternion division algebra over $F$ by $D$ and we fix an embedding of $E$ to $D$.
Fix a representative $\tau$ of the nontrivial element of $F^{\times}/\N_{E/F}(E^{\times})$.
Then there is an element $\sqrt{\tau}$ in $D^{\times}$ which satisfies
\begin{itemize}
\item as a vector space over $E$, $D=E\oplus E\sqrt{\tau}$;
\item we have 
\[
(x_1+x_2\sqrt{\tau})(y_1+y_2\sqrt{\tau})=(x_1y_1+x_2y_2^c\tau)+(x_1y_2+x_2y_1^c)\sqrt{\tau}
\]
for any $x_1, x_2, y_1, y_2\in E$.
\end{itemize}

Define inductively an element $\ve_m$ of $G_m$ by
\[
\ve_1=\left(\begin{array}{cc}
  &\tau \\
1&
\end{array}\right), \hspace{10pt} \ve_m=\left(\begin{array}{cc}
\ve_1&    \\
        &\ve_{m-1}
\end{array}\right).
\]
Let $\sigma$ be the involution of $G$ given by 
\[
\sigma(g)=\ve_mg^c\ve_m^{-1}, \hspace{10pt} g\in G.
\]
The subgroup $H=H_m$ of fixed points of $\sigma$ is isomoprhic to $\GL_m(D)$.
For $m=1$ case, $H_1$ consists of matrices of the form
\[
\left(\begin{array}{cc}
a    &b\tau \\
b^c&a^c
\end{array}\right)
\]
with $a, b\in E$ satisfying $a^2-\tau b^2\neq0$.
Thus, an explicit isomorphism $H_1\cong D^{\times}$ is given by
\begin{equation}\label{eq:H_1}
\left(\begin{array}{cc}
a    &b\tau \\
b^c&a^c
\end{array}\right)\in H_1\mapsto a+b\sqrt{\tau}\in D^{\times}.
\end{equation}
In general, write an element $h$ of $H_m$ in the form of a 2 by 2 block matrix $(h_{i, j})_{1\leq i, j\leq m}$.
Each entry $h_{i, j}$ is of the form (\ref{eq:H_1}) and it corresponds to an element of $D$ similarly as above.
This observation provides an isomorphism $H_m\cong\GL_m(D)$.

Let $\la=(n_1, \ldots, n_r)$ be a partition of $n$.
Suppose each $n_i$ is even and write $n_i=2m_i$.
Then, $P_{\la}\cap H$ is a parabolic subgroup of $H$ which is, under the above isomorphism, sent to the parabolic subgroup of $\GL_m(D)$ consisting of matrices of the form
\[
\left(\begin{array}{ccc}
g_1&\ast     &\ast \\
      &\ddots&\ast \\
      &           &g_r
\end{array}\right)\in\GL_m(D), \hspace{10pt} g_i\in\GL_{m_i}(D).
\]
We call these parabolic subgroups standard.
Let $K_H$ be a maximal compact subgroup of $H$ which satisfies $H=(P_{\la}\cap H)K_H$ for any standard parabolic subgroups $P_{\la}\cap H$.

Let $x=x_m$ be the element of $\Sf_n$ defined by
\[
x(i)=\left\{\begin{array}{ll}
n-2i+1 &(1\leq i\leq m) \\
2i-n &(m+1\leq i\leq n).
\end{array}\right.
\]
Set $H'=H'_m=x^{-1}Hx$.
This is the subgroup of fixed points of the involution $\sigma'\,\colon g\mapsto \ve' g^c{\ve'}^{-1}$, where 
\[
\ve'=\ve'_m=x^{-1}\ve x=\left(\begin{array}{cc}
        &\tau w_m \\
w_m&
\end{array}\right).
\]
Hence, $H'$ consists of matrices of the form
\[
\left(\begin{array}{cc}
A                &\tau B\\
w_mB^cw_m&w_mA^cw_m
\end{array}\right)\in G, \hspace{10pt} A, B\in\M_m(E).
\]
The map $\left(\begin{array}{cc}
A                &\tau B\\
w_mB^cw_m&w_mA^cw_m
\end{array}\right)\mapsto A+\sqrt{\tau}B\in\GL_m(D)$ defines an isomorphism from $H'$ to $\GL_m(D)$.

\subsection{Double coset decomposition}
Let $\la=(n_1, \ldots, n_r)$ be a partition of $n$.
In this section, we recall some results on the double coset decomposition $H'\bs G/P_{\la}$ from \cite{matringe2017distinction}.

Let $\Sc(\la)$ be the set of $r$ by $r$ matrices $S=(s_{i, j})$ which satisfies following conditions:
\begin{itemize}
\item all entries are non-negative integers;
\item for any $1\leq i, j\leq r$, $s_{i, j}=s_{j, i}$, \textit{i.e.} $S=\t S$;
\item for any $1\leq i\leq r$, $s_{i, i}\in 2\Z_{\geq 0}$;
\item for any $1\leq i\leq r$, $\sum_{j=1}^rs_{i, j}=n_i$.
\end{itemize}
Take an element $S$ of $\Sc(\la)$.
For each $1\leq i\leq r$, set $t_i=s_{i, i}/2$ and $d_i=\sum_{j=i}^r s_{i, j}-t_i$.
Then, we obtain a partition $(d_1, \ldots, d_r)$ of $m$ and a partition $(t_i, s_{i, i+1}, s_{i, i+2}, \ldots, s_{i, r})$ of $d_i$ for each $1\leq i\leq r$.
Define an element $w_S$ as follows: for any $1\leq l\leq r$,
\begin{itemize}
\item[(w1)] if there are integers $i$ and $k$ satisfying $1\leq i\leq r$ and $1\leq k\leq t_i$ so that $l$ can be expressed as 
\[
l=d_1+\cdots+d_{i-1}+k,
\] we set 
\[
w_S(l)=(n_1+\cdots+n_{i-1})+s_{i, 1}+\cdots+s_{i, i-1}+k;
\]

\item[(w2)] if there are integers $i$ and $k$ satisfying $1\leq i<j\leq r$ and $1\leq k\leq s_{i, j}$ so that $l$ can be expressed as 
\[
l=(d_1+\cdots+d_{i-1})+(t_i+s_{i, i+1}+\cdots+s_{i, j-1})+k,
\]
we set 
\[
w_S(l)=(n_1+\cdots+n_{i-1})+(s_{i, 1}+\cdots+s_{i, j-1})+k;
\]

\item[(w3)] if there are integers $i$ and $k$ satisfying $1\leq i\leq r$ and $1\leq k\leq t_i$ so that $l$ can be expressed as 
\[
l=m+(d_r+\cdots+d_{i+1})+(s_{r, i}+s_{r-1, i}+\cdots+s_{i+1, i})+k,
\] we set 
\[
w_S(l)=(n_1+\cdots+n_{i-1})+s_{i, 1}+\cdots+s_{i, i-1}+t_i+k;
\]

\item[(w4)] if there are integers $i$ and $k$ satisfying $1\leq j<i\leq r$ and $1\leq k\leq s_{i, j}$ so that $l$ can be expressed as 
\[
l=m+(d_r+\cdots+d_{j+1})+(s_{r, j}+s_{r-1, j}+\cdots+s_{i+1, j})+k,
\]
we set 
\[
w_S(l)=(n_1+\cdots+n_{i-1})+(s_{i, 1}+\cdots+s_{i, j-1})+k.
\]
\end{itemize}

\begin{prop}[\cite{matringe2017distinction}, Proposition 3.1]
For a partition $\la$ of $n$, we have
\[
G=\coprod_{S\in\Sc(\la)}Pw_SH'.
\]
\end{prop}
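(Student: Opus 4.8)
The plan is to prove the double coset decomposition $G=\coprod_{S\in\Sc(\la)}Pw_SH'$ by analyzing the action of $G$ on a suitable variety of flags-with-extra-structure, following the strategy used for Galois-type involutions. Concretely, $P=P_\la$ acts on $G/H'$; since $H'$ is the fixed-point group of the involution $\sigma'$, the quotient $G/H'$ embeds $\sigma'$-equivariantly into the symmetric space $X=\{g\in G : \sigma'(g)g^{-1} \text{ lies in the appropriate } \sigma'\text{-twisted form}\}$, and more usefully one identifies $G/H'$ with a space of nondegenerate $\ve'$-sesquilinear (conjugate-symmetric) forms on $E^n$ modulo scaling. So the task becomes: classify the $P_\la$-orbits on such forms, where $P_\la$ is the stabilizer of the standard partial flag $0\subset E^{n_1}\subset E^{n_1+n_2}\subset\cdots\subset E^n$.

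First I would set up the dictionary: for $g\in G$, the coset $gH'$ corresponds to the form $B_g(v,w)=\ve'$-pairing of $g^{-1}v$ and $g^{-1}w$, which is conjugate-symmetric and hence, by the structure theory of such forms over $E/F$ (they are governed by $F^\times/\N_{E/F}(E^\times)$, i.e. two classes, split and anisotropic), is classified up to $\GL_n(E)$-equivalence. The key invariants of a $P_\la$-orbit are then: for each pair $i\le j$, the dimension $s_{i,j}$ of the "interaction" between the $i$-th and $j$-th graded pieces of the flag with respect to $B_g$, together with the radicals of the induced forms on the diagonal pieces. The conditions defining $\Sc(\la)$ — symmetry $s_{i,j}=s_{j,i}$, the parity $s_{i,i}\in 2\Z_{\ge0}$ (reflecting that a nondegenerate conjugate-symmetric form on the $i$-th piece restricted to a totally isotropic-complement decomposition forces an even-rank hyperbolic part, since the anisotropic kernel of such a form over a local field has rank $\le 1$ and here gets absorbed by the symmetry of the block structure), and the row sums $\sum_j s_{i,j}=n_i$ — are precisely the discrete invariants that survive. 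Then I would check that the explicit Weyl-type element $w_S$ defined by (w1)–(w4) is a representative realizing the invariant $S$: one computes directly that $w_S^{-1}$ conjugates the standard form associated to $\ve'$ into a form whose flag-interaction matrix is $S$, which is a bookkeeping verification using the block description of $H'$ given above (entries $\begin{pmatrix}A&\tau B\\ w_mB^cw_m&w_mA^cw_m\end{pmatrix}$).

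The two things to nail down are: (a) \emph{exhaustiveness} — every conjugate-symmetric form, when intersected against the flag, produces invariants satisfying the $\Sc(\la)$ conditions, which follows from Witt-type decomposition of the form adapted to the flag filtration (peel off the radical of the top piece, induct on the flag length and on $n$); and (b) \emph{disjointness} — distinct $S$ give non-$P_\la$-conjugate forms, because the numbers $s_{i,j}$ are manifestly $P_\la$-invariants of the orbit (they are dimensions of canonically defined subquotients). I expect the main obstacle to be (a): carefully organizing the inductive peeling so that the parity constraint $s_{i,i}\in2\Z_{\ge0}$ drops out correctly and the recursion lands inside $\Sc$ of the smaller partition — this is where one must be most careful about how the anisotropic part of the local Hermitian form (which contributes the class $\ka\in F^\times/\N_{E/F}(E^\times)$) interacts with the graded pieces, and it is the step where simply citing \cite{matringe2017distinction} is the honest move, since the combinatorics there is exactly this. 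Once (a) and (b) are in place, together with the verification that $\{w_S\}$ are representatives, the stated equality $G=\coprod_{S\in\Sc(\la)}Pw_SH'$ follows.
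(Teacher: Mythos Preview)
The paper does not prove this proposition: it is quoted verbatim from \cite[Proposition~3.1]{matringe2017distinction}, and the surrounding section only records the statement together with the description of $M_S^{\sigma'_S}$ and the modulus identity. So there is no argument in the present paper to compare yours against; any comparison would have to be with Matringe's original.

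That said, your overall plan---realize $P_\la\bs G/H'$ as the set of $P_\la$-orbits on the symmetric variety $G/H'$, extract the discrete invariants $s_{i,j}$ from the interaction of the structure with the standard flag, prove exhaustiveness by a Witt-type peeling induction and disjointness by invariance of the $s_{i,j}$, and finally check that $w_S$ realizes the invariant $S$---is the right shape and is essentially how such decompositions are obtained.

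There is, however, one genuine slip in your heuristics. Your explanation of the parity constraint $s_{i,i}\in 2\Z_{\ge 0}$ via ``the anisotropic kernel of a conjugate-symmetric form has rank $\le 1$ and gets absorbed'' is the picture for the Galois pair $(\GL_n(E),\GL_n(F))$, i.e.\ ordinary $E/F$-Hermitian forms---and in that split case there is \emph{no} parity constraint on the diagonal blocks at all. Here $H'\cong\GL_m(D)$, so $G/H'$ does not parametrize Hermitian forms but rather $D$-module structures on $E^{n}$; equivalently, $E$-semilinear operators $J$ on $E^{n}$ with $J^2=\tau$ (this is precisely what $\ve'$ encodes). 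The restriction of such a $J$ to the $i$-th diagonal graded piece is again a $D$-module structure on an $E$-vector space, which forces that piece to have even $E$-dimension since $\dim_E D=2$. The parity condition is therefore structural, not an artifact of small anisotropic kernels, and your step~(a) as written would not produce it. If you rerun the induction with the $J$-operator (equivalently $D$-module) viewpoint in place of the Hermitian-form viewpoint, the evenness of $s_{i,i}$ drops out automatically and the rest of your outline goes through.
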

An element $S=(s_{i, j})$ of $\Sc(\la)$ can be identified with a subpartition 
\[
(s_{1, 1}, s_{1, 2}, \ldots, s_{r, r-1}, s_{r, r})
\] of $\la$.
Here, 0-entries are ignored.
Set $\ve'_S=w_S\ve' w_S^{-1}$.
We define an involution $\sigma'_S$ of $G$ by $\sigma'_S\,\colon g\mapsto \ve'_S g^c{\ve'_S}^{-1}$.
Note that $P_S$ and $M_S$ is stable under $\sigma'_S$.
An element $m$ of $M_S$ is of the form $m=\d(m_{1, 1}, m_{1, 2}, \ldots, m_{r, r-1}, m_{r, r})$ with $m_{i, j}\in G_{s_{i, j}}$.
Of course components where $s_{i, j}=0$ are eliminated.
Then, $m$ is contained in $M_S^{\sigma'_S}$ if and only if the following conditions are satisfied:
\begin{itemize}
\item for any $1\leq i\leq r$, $m_{i, i}=\ve'_{t_i}m_{i, i}^c{\ve'_{t_i}}^{-1}$ \textit{i.e.} $m_{i, i}\in H'_{t_i}$;
\item for any $1\leq i\neq j\leq r$, $m_{i, j}=w_{s_{i, j}}m_{j, i}^cw_{s_{i, j}}^{-1}$.
\end{itemize} 
Hence, we get
\[
M_S^{\sigma'_S}\cong\prod_{i=1}^rH'_{t_i}\times\prod_{1\leq i<j\leq r}G_{s_{i, j}}.
\]

Set $P'_S=M\cap P_S$.
This is a parabolic subgroup of $M$.

\begin{prop}[\cite{matringe2017distinction}, Proposition 3.3]\label{prop:moduluscharacter}
We have $\de_{P_S^{\sigma'_S}}^2|_{M_S^{\sigma'_S}}=\de_{P'_S}\de_P|_{M_S^{\sigma'_S}}=\de_{P_S}|_{M_S^{\sigma'_S}}$.
\end{prop}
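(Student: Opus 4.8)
The plan is to compute both sides via the adjoint action on the Lie algebras of unipotent radicals, and then to use the eigenspace decomposition of the Lie algebra $\mathfrak n_S$ of $N_S$ under the involution $\sigma'_S$. Recall that for a parabolic subgroup $Q=LU$ of a reductive group over $F$ one has $\de_Q(l)=|\det(\Ad(l)\,;\,\mathrm{Lie}\,U)|_F$ for $l\in L$, where $\mathrm{Lie}\,U$ is regarded over $F$ and $\det(\Ad(l)\,;\,\mathrm{Lie}\,U)$ denotes the determinant of $\Ad(l)$ acting on it. Since $|x|_E=|\N_{E/F}(x)|_F$ and the $F$-determinant of an $E$-linear map is the norm of its $E$-determinant, this formula computes $\de_{P_S}$ on $M_S$ whether one takes determinants over $F$ or over $E$. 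Likewise $P_S^{\sigma'_S}=M_S^{\sigma'_S}\ltimes N_S^{\sigma'_S}$ has unimodular factors, so $\de_{P_S^{\sigma'_S}}(m)=|\det(\Ad(m)\,;\,\mathrm{Lie}\,N_S^{\sigma'_S})|_F$ for $m\in M_S^{\sigma'_S}$; and because $N_S$ is unipotent and $\sigma'_S$-stable, $\sigma'_S$-equivariance of the exponential map gives $\mathrm{Lie}\,N_S^{\sigma'_S}=(\mathrm{Lie}\,N_S)^{d\sigma'_S}=:\mathfrak n_S^+$, the fixed space of the $F$-linear involution $d\sigma'_S\colon X\mapsto\ve'_S X^c(\ve'_S)^{-1}$ of $\mathfrak n_S$.

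The second equality is formal. Since $M_S$ is a standard Levi subgroup of $M$ and $P_S$ the corresponding standard parabolic, one has $P_S\subseteq P$, hence $N\subseteq N_S$ and an $M_S$-stable direct sum $\mathfrak n_S=\mathfrak n'_S\oplus\mathfrak n$, where $\mathfrak n=\mathrm{Lie}\,N$ and $\mathfrak n'_S=\mathrm{Lie}\,N(P'_S)=\mathfrak n_S\cap\mathrm{Lie}\,M$. Taking $F$-determinants of $\Ad(m)$ across this sum for $m\in M_S$ yields $\de_{P_S}=\de_{P'_S}\cdot(\de_P|_{M_S})$ as characters of $M_S$, which is the asserted identity after further restriction to $M_S^{\sigma'_S}$.

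The content lies in the first equality. As $P_S$ and $M_S$ are $\sigma'_S$-stable, so is $N_S$, and $d\sigma'_S$ is an $F$-linear involution of $\mathfrak n_S$; write $\mathfrak n_S=\mathfrak n_S^+\oplus\mathfrak n_S^-$ for its $(\pm1)$-eigenspaces. Since $\sigma'_S$ is a group automorphism, $\Ad(m)$ commutes with $d\sigma'_S$ for every $m\in M_S^{\sigma'_S}$, so $\Ad(m)$ preserves this decomposition and
\[
\de_{P_S}(m)=\bigl|\det\bigl(\Ad(m)\,;\,\mathfrak n_S^+\bigr)\bigr|_F\cdot\bigl|\det\bigl(\Ad(m)\,;\,\mathfrak n_S^-\bigr)\bigr|_F .
\]
It then suffices to exhibit an isomorphism $\mathfrak n_S^+\xrightarrow{\ \sim\ }\mathfrak n_S^-$ intertwining $\Ad(m)$ for all $m\in M_S^{\sigma'_S}$, for then the two determinants coincide and each equals $\de_{P_S^{\sigma'_S}}(m)$. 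Fix $\al\in E^\times$ with $\al^c=-\al$ (which exists since $E/F$ is a quadratic extension in characteristic $0$). Multiplication by $\al$ on $\M_n(E)$ is given by a central matrix, hence is $F$-linear and commutes with every $\Ad(m)$; it maps the $E$-subspace $\mathfrak n_S$ into itself; and it carries $\mathfrak n_S^+$ onto $\mathfrak n_S^-$, because $d\sigma'_S(\al X)=\al^c\,d\sigma'_S(X)=-\al X$ whenever $d\sigma'_S(X)=X$. Its inverse is multiplication by $\al^{-1}$, so it is the required isomorphism, and we conclude $\de_{P_S}|_{M_S^{\sigma'_S}}=\de_{P_S^{\sigma'_S}}^2$.

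I do not expect a genuine obstacle here: the multiplication-by-$\al$ argument is the whole content of the first equality, and it works precisely because the intertwiner $\mathfrak n_S^+\cong\mathfrak n_S^-$ can be taken to be a scalar and is thus automatically $\Ad$-equivariant. What requires attention is only bookkeeping: matching the normalization of $|\cdot|_E$ with $|\cdot|_F$ so that the determinant formula for $\de_{P_S}$ is unambiguous, the identification $\mathrm{Lie}\,N_S^{\sigma'_S}=\mathfrak n_S^+$, and the elementary Levi decomposition $P_S^{\sigma'_S}=M_S^{\sigma'_S}\ltimes N_S^{\sigma'_S}$ with unimodular factors.
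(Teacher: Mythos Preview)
Your argument is correct. Note, however, that the paper does not actually prove this proposition: it is quoted verbatim from \cite{matringe2017distinction} (Proposition~3.3 there), so there is no in-paper proof to compare against. What you have written is a clean, self-contained verification: the second equality is just transitivity of modulus characters under the decomposition $\mathfrak n_S=\mathfrak n'_S\oplus\mathfrak n$, and for the first equality your multiplication-by-$\al$ map (with $\al^c=-\al$) is the standard device for Galois-type involutions, giving an $\Ad(M_S^{\sigma'_S})$-equivariant $F$-isomorphism $\mathfrak n_S^+\xrightarrow{\sim}\mathfrak n_S^-$ and hence $\de_{P_S}=\de_{P_S^{\sigma'_S}}^2$ on $M_S^{\sigma'_S}$. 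The bookkeeping you flag (the compatibility $|\det_E|_E=|\det_F|_F$, the Levi decomposition $P_S^{\sigma'_S}=M_S^{\sigma'_S}\ltimes N_S^{\sigma'_S}$, and unimodularity of the factors, which follows from the explicit description $M_S^{\sigma'_S}\cong\prod_i H'_{t_i}\times\prod_{i<j}G_{s_{i,j}}$) is all routine.
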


\subsection{Some results on $H'$-distinguished representations}
We record some results about $H'$-distinguished representations of $G$.
One can prove these results by almost the same arguments as in \cite{flicker1991distinguished} with minor modification.
So, we do not give a proof or only give a sketch.

\begin{prop}[See \cite{flicker1991distinguished}, Proposition 10]
For any $g\in G$, there are $x, y\in H'$ with $g^{-1}=x\sigma'(g)y$. 
\end{prop}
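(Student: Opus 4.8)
### Proof plan for the double coset statement

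The plan is to reduce the claim to a statement about the symmetric space $G/H'$ via the involution $\sigma'$, exactly as in \cite[Proposition 10]{flicker1991distinguished}. First I would recall that $H'$ is the fixed-point subgroup of $\sigma'$, so that the ``twisted conjugation'' action $g\mapsto x\sigma'(g)y^{-1}$ of $H'\times H'$ on $G$ has the same orbits as the action of $H'$ on the symmetric variety, up to the usual identification. Concretely, the map $g\mapsto g\,\sigma'(g)^{-1}$ intertwines this two-sided action with the $H'$-conjugation action on the image, so the content of the proposition is that every $g\in G$ lies in the $H'\times H'$-orbit (under $(x,y)\cdot g = x\,g\,y$, appropriately twisted) of an element $z$ with $\sigma'(z)=z^{-1}$, and in fact one can arrange the stronger normalization $g^{-1}=x\sigma'(g)y$.

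The key steps, in order, are as follows. First, observe that it suffices to treat the case $m=1$, because a general element of $G=\GL_{2m}(E)$ can be brought, by left and right multiplication by $H'$, into a block form governed by the double-coset decomposition of the previous subsection, and then one works component by component; alternatively one invokes directly that $G$ is generated by $H'$ together with a torus on which $\sigma'$ acts by inversion. Second, for $m=1$ one uses the explicit description of $H'_1\cong D^{\times}$ together with the fact that $\ve'$ has the explicit form $\left(\begin{smallmatrix}&\tau w_m\\ w_m&\end{smallmatrix}\right)$; a direct matrix computation produces, for any $g\in G$, elements $x,y\in H'$ realizing $g^{-1}=x\sigma'(g)y$. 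Third, one checks that $x\sigma'(g)y$ is the right normalization: since $\sigma'$ is an involution, $\sigma'(g^{-1})=\sigma'(x\sigma'(g)y)=\sigma'(x)g\sigma'(y)$, and the relation one has written is self-consistent with $x,y\in H'^{\sigma'}=H'$, so no contradiction arises and the identity descends to all~$m$.

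The main obstacle I expect is the passage from the $m=1$ (or rank-one torus) case to general $m$: one must verify that the two-sided $H'$-action is ``transitive enough'' on each double coset $Pw_SH'$ appearing in Proposition~3.1, i.e. that the twisted-conjugation orbits are not finer than the naive double cosets. This is where the symmetry conditions on $M_S^{\sigma'_S}$ recorded above (that $m_{i,i}\in H'_{t_i}$ and $m_{i,j}=w_{s_{i,j}}m_{j,i}^c w_{s_{i,j}}^{-1}$) do the work: they show that within each $M_S^{\sigma'_S}$-stratum the relevant action is governed by $\GL_{s_{i,j}}(E)$-factors on which $\sigma'_S$ is, up to conjugation, the standard Galois twist — and for $\GL_k(E)$ with the Galois involution the statement ``$g^{-1}=x\,g^c\,y$ is solvable with $x,y\in\GL_k(F)$'' is the classical Hilbert~90-type fact. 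Once this reduction is in place the rest is bookkeeping, so I would present the proof as: (i) reduce to the split/Galois case via the $\ve'_S$-normalization, (ii) cite or reprove the $\GL_k$ statement, (iii) reassemble. Given the paper's stated convention that these results are proved ``by almost the same arguments as in \cite{flicker1991distinguished}'', I would in fact only sketch (i) and (iii) and refer to \cite{flicker1991distinguished} for (ii).
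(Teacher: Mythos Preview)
The paper does not prove this proposition; it states that the argument is that of \cite[Proposition 10]{flicker1991distinguished} with minor modification and omits the details entirely. So there is no proof in the paper to compare your plan against---only Flicker's original argument, which you would have to adapt.

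Your proposed reduction has a genuine gap. The decomposition recalled in \S3.2 is $P_\lambda\backslash G/H'$, with a \emph{parabolic} on one side and $H'$ on the other---not $H'\backslash G/H'$. You therefore cannot bring a general $g$ into block form ``by left and right multiplication by $H'$'' using that result: only one of the two factors lies in $H'$, and moving by $P_\lambda$ on the left is not an $H'$-move. Likewise, the description of $M_S^{\sigma'_S}$ you invoke records the stabiliser for the $P_\lambda$--$H'$ action, not for the $H'\times H'$ action, so it does not yield the stratification your inductive scheme needs. Your alternative suggestion (``$G$ is generated by $H'$ together with a torus on which $\sigma'$ acts by inversion'') does not rescue the argument either: generation does not produce a factorisation $g=ht$ for every $g$, and even on elements admitting such a factorisation the identity $g^{-1}=x\,\sigma'(g)\,y$ is not a formal consequence.

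What actually transports from Flicker is a direct argument on $H'\backslash G/H'$ itself. One observes that $g\mapsto\sigma'(g)^{-1}$ is an anti-automorphism of $G$ preserving $H'$, hence permutes the $H'$-double cosets; the proposition says this permutation is trivial. Flicker checks this on an explicit set of representatives for $\GL_n(F)\backslash\GL_n(E)/\GL_n(F)$ (equivalently, via the symmetric-space map $g\mapsto g\,\sigma'(g)^{-1}$ and a normal-form argument on its image), and the same style of argument goes through for the inner twist $H'\cong\GL_m(D)$ once the orbit invariants are identified. No passage through \S3.2 or reduction to $m=1$ is involved.
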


\begin{prop}[See \cite{flicker1991distinguished}, Proposition 11]
For an irreducible representation $\pi$ of $G$, the dimension of $\Hom_{H'}(\pi, \C)$ is at most 1.
\end{prop}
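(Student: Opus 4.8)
The plan is to establish the multiplicity-one statement for $\Hom_{H'}(\pi,\C)$ by the now-standard Gelfand-pair argument, following Proposition 11 of \cite{flicker1991distinguished} with only cosmetic changes coming from the presence of the quaternion algebra. The ultimate input is a Gelfand-Kazhdan-type criterion: if $\sigma$ is an involution of $G$ (here $\sigma=\sigma'$, so that $H'=G^{\sigma'}$) and every $H'$-bi-invariant distribution on $G$ is fixed by the anti-automorphism $g\mapsto\sigma'(g^{-1})$ (equivalently, in our case, by $g\mapsto {}^t g$ after the geometric identification), then for every irreducible $\pi$ the spaces $\Hom_{H'}(\pi,\C)$ and $\Hom_{H'}(\pi^\vee,\C)$ cannot both be nonzero of dimension $>1$; combined with $\pi\cong\pi^\vee\circ(\text{transpose-type map})$-type symmetry one deduces $\dim\Hom_{H'}(\pi,\C)\le 1$.

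The key steps, in order, are as follows. First I would invoke the preceding proposition (the analogue of \cite{flicker1991distinguished}, Proposition 10) that for every $g\in G$ there exist $x,y\in H'$ with $g^{-1}=x\,\sigma'(g)\,y$; this says precisely that the anti-involution $\iota\colon g\mapsto \sigma'(g)^{-1}$ preserves every $H'\times H'$ double coset in $G$. Second, I would set up the space $\mathcal{D}$ of distributions on $G$ that are bi-invariant under $H'$ and show, using a standard localization/Bernstein-type argument on the orbit stratification of $H'\backslash G/H'$, that the conclusion of step one forces every element of $\mathcal{D}$ to be $\iota$-invariant. (This is where one needs that the relevant double cosets are closed or that a Noetherian induction over the orbit closure order applies; the quaternionic structure does not interfere, since the involution $\sigma'$ has exactly the same shape $g\mapsto \ve' g^c {\ve'}^{-1}$ as in the split case, just with $\ve'$ replaced by its quaternionic avatar.) Third, I would translate $\iota$-invariance of $H'$-bi-invariant distributions into the statement that $(\pi,\pi^\vee)$ cannot support two independent $H'$-invariant functionals, via the usual pairing of a functional in $\Hom_{H'}(\pi,\C)$ with one in $\Hom_{H'}(\pi^\vee,\C)$ against a matrix coefficient. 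Fourth, I would use that $\pi^\vee$ is again $H'$-distinguished whenever $\pi$ is --- this follows because $\sigma'$-conjugation is an automorphism of $G$ carrying $H'$ to itself and, combined with $g\mapsto {}^tg$, relates $\pi$ to $\pi^\vee$ --- to conclude $\dim\Hom_{H'}(\pi,\C)=\dim\Hom_{H'}(\pi^\vee,\C)\le 1$.

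I expect the main obstacle to be the second step: verifying that every $H'$-bi-invariant distribution on $G$ is automatically $\iota$-invariant. The formal Gelfand-Kazhdan machinery reduces this to showing that on each $H'\times H'$-orbit $\mathcal{O}$ in $G$ and for the normal bundle data at $\mathcal{O}$, the relevant stabilizer characters do not obstruct $\iota$-invariance; in \cite{flicker1991distinguished} this rests on a careful analysis of the orbits and their stabilizer groups, and one must check that the analogous orbit/stabilizer computation goes through when $\GL_{2m}(F)$ is replaced by the non-split inner form $\GL_m(D)$. Concretely, the stabilizers that arise are (twisted) unitary-type or quaternionic-unitary-type groups, and one needs that these are their own $\iota$-fixed points up to an inner twist --- a statement that is true but that requires spelling out the block structure of $\ve'$ and $H'$ given in Section 3.1. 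Since the paper explicitly says one should argue ``by almost the same arguments as in \cite{flicker1991distinguished} with minor modification'' and offers ``only a sketch,'' I would present this step as a reduction to the cited orbit computation, noting where the quaternion algebra enters (only through the explicit form of $\ve'$ and the identification $H'_1\cong D^\times$ of \eqref{eq:H_1}) and that none of those entry points affects the Gelfand-pair conclusion.
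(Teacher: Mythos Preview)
Your outline is correct and matches the paper's route, but your emphasis is inverted relative to the paper's sketch. The paper treats your steps (1)--(3) as a one-line consequence of the Gel'fand--Kazhdan theorem together with the preceding proposition: the double-coset preservation supplied by that proposition is the entire geometric input, and no further orbit-by-orbit or normal-bundle analysis is performed. What the paper actually spells out is your step (4), and there it is more precise than your sketch. The automorphism used is $\alpha\colon g\mapsto w_n\,{}^t g^{-1}\,w_n^{-1}$; one checks that $\sigma'$ and $\alpha$ commute (equivalently, that $\ve' w_n$ is a scalar multiple of $w_n\,{}^t\ve'^{-1}$), so $H'$ is $\alpha$-stable. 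Hence a nonzero $l\in\Hom_{H'}(\pi,\C)$ is automatically a nonzero element of $\Hom_{H'}(\pi',\C)$ for $\pi'(g)=\pi(w_n\,{}^t g^{-1}\,w_n^{-1})\cong\pi^\vee$. Your phrasing ``$\sigma'$-conjugation \ldots\ combined with $g\mapsto{}^tg$'' is in the right spirit but imprecise: $\sigma'$ alone only relates $\pi$ to $\pi^c$, and $H'$ is not stable under plain transpose; the $w_n$-conjugation is what makes it work.
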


\begin{proof}
We sketch the proof.
For details, see \cite[Proposition 11]{flicker1991distinguished}.

Applying the theorem of Gel'fand and Kazhdan, and due to the above proposition, we are reduced to show the following claim:
\begin{quote}
For an irreducible representation $\pi$ of $G$, if $\Hom_{H'}(\pi, \C)$ is nonzero, then $\Hom_{H'}(\pi^{\vee}, \C)$ is nonzero.
\end{quote}
Let $l$ be an element of $\Hom_{H'}(\pi, \C)$.
We define a representation $\pi'$ of $G$ over the space of $\pi$ by
\[
\pi'(g)=\pi(w_n\t g^{-1}w_n^{-1}), \hspace{10pt} g\in G.
\]
Then, $\pi^{\vee}$ is isomorphic to $\pi'$.
Since $H'$ is stable under the involution $g\mapsto w_n\t g^{-1}w_n^{-1}$, $l$ is an element of $\Hom_{H'}(\pi', \C)$.
This proves the above claim.
\end{proof}

\begin{prop}[See \cite{flicker1991distinguished}, Proposition 12]
If an irreducible representation $\pi$ of $G$ is $H$-distinguished, then $\pi$ is conjugate self-dual, \textit{i.e.} $\pi^c\cong\pi^{\vee}$.
\end{prop}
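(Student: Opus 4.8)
The plan is to adapt Flicker's argument \cite[Proposition~12]{flicker1991distinguished}. First I would observe that it is enough to treat $H'$ in place of $H$: since $H'=x^{-1}Hx$ for the permutation matrix $x=x_m$, and conjugation by an element of $G$ changes neither the isomorphism class of an irreducible representation nor the condition $\pi^c\cong\pi^\vee$, the statement for $H'$ implies the one for $H$; working with $H'$ has the advantage that the two preceding propositions apply directly. I would then note that $\pi^c$ is again $H'$-distinguished: $\sigma'$ restricts to the identity on $H'=G^{\sigma'}$, so for $l\in\Hom_{H'}(\pi,\C)$ and $h\in H'$ one has $l((\pi\circ\sigma')(h)v)=l(\pi(h)v)=l(v)$, whence $l\in\Hom_{H'}(\pi\circ\sigma',\C)$; and since $\sigma'(g)=\ve'g^c{\ve'}^{-1}$ differs from the Galois twist $g\mapsto g^c$ by an inner automorphism, $\pi\circ\sigma'\cong\pi^c$. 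Likewise, the computation in the proof of the multiplicity one proposition --- via the involution $g\mapsto w_n\t g^{-1}w_n^{-1}$, which stabilizes $H'$, together with the isomorphism between $\pi^\vee$ and $g\mapsto\pi(w_n\t g^{-1}w_n^{-1})$ --- shows that $\pi^\vee$ is $H'$-distinguished. Thus $\pi$, $\pi^c$, and $\pi^\vee$ are all $H'$-distinguished.

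Next I would bring in the Gelfand--Kazhdan machinery to promote this to the isomorphism $\pi^c\cong\pi^\vee$. The map $\iota(g)=\sigma'(g)^{-1}=\ve'(g^{-1})^c{\ve'}^{-1}$ is an anti-involution of $G$ preserving $H'$, and by the proposition that every $g\in G$ can be written as $g^{-1}=h_1\sigma'(g)h_2$ with $h_1,h_2\in H'$, it fixes every $(H',H')$-double coset of $G$ setwise; hence every $H'$-bi-invariant distribution on $G$ is $\iota$-invariant. Feeding this into the Gelfand--Kazhdan argument as in the proof of the multiplicity one proposition, and tracking the twist, the $\iota$-contragredient of $\pi$ is identified with $(\pi^c)^\vee$; combining this with multiplicity one and with the two distinctions established above forces $\pi^c\cong\pi^\vee$, just as in \cite{flicker1991distinguished}.

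The hard part will be this last step. The geometry --- the double coset decomposition and the $\iota$-invariance of bi-$H'$-invariant distributions --- only gives that the various Galois and contragredient twists of $\pi$ are simultaneously $H'$-distinguished, and turning this into the single isomorphism $\pi^c\cong\pi^\vee$ is where the Gelfand--Kazhdan uniqueness argument must be used in full. For generic $\pi$ one might alternatively try to argue through $L$-functions: the $H'$-period produces a pole at $s=0$ of a local Asai-type $L$-function attached to $\pi$, hence a pole of $L(s,\pi\times\pi^c)=L(s,\pi,\As^+)L(s,\pi,\As^-)$, and this Rankin--Selberg $L$-function has such a pole precisely when $\pi^c\cong\pi^\vee$.
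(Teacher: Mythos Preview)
Your proposal is correct and follows exactly the approach the paper indicates: the paper gives no proof here, only the remark that ``one can prove these results by almost the same arguments as in \cite{flicker1991distinguished} with minor modification,'' and your outline \emph{is} that adaptation. You correctly reduce from $H$ to $H'=x^{-1}Hx$, correctly identify the anti-involution $\iota(g)=\sigma'(g)^{-1}$, verify via the preceding proposition that $\iota$ fixes every $(H',H')$-double coset, and compute that the $\iota$-twisted contragredient of $\pi$ is $(\pi^c)^\vee$; this is precisely Flicker's Proposition~12 mechanism transported to the present involution.

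Two small remarks. First, the sentence ``combining this with multiplicity one and with the two distinctions established above forces $\pi^c\cong\pi^\vee$'' undersells the logic: merely knowing that $\pi$, $\pi^c$, $\pi^\vee$ are all $H'$-distinguished and that multiplicity one holds does not by itself pin down an isomorphism among them. What actually happens in Flicker's argument is that from $l\in\Hom_{H'}(\pi,\C)$ and $l'\in\Hom_{H'}(\pi^\vee,\C)$ one builds a bi-$H'$-invariant distribution on $G$ (the generalized matrix coefficient), and its $\iota$-invariance directly forces $\pi\cong(\pi^c)^\vee$. So the observation that $\pi^\vee$ is $H'$-distinguished is exactly what is needed to produce $l'$; the separate observation that $\pi^c$ is $H'$-distinguished is a byproduct rather than an input. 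Second, your alternative $L$-function route at the end is only heuristic here: it covers only generic $\pi$, and the assertion that the $H'$-period forces a pole of an Asai-type $L$-function is not something established in this paper (that link is known for the $\GL_n(F)$-period, not a priori for the $\GL_m(D)$-period).
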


\section{Zelevinsky classification}
In this section, we summarize some results on classification of irreducible representations of $G$ by Zelevinsky \cite{zelevinsky1980induced}.

First, we recall the classification of irreducible essentially square-integrable representations.

\begin{prop}
Let $l$ be a divisor of $n$ and set $\la=(n/l, \ldots, n/l)$ be a partition of $n$.
For an irreducible supercuspidal representation $\rho$ of $G_{n/l}$, the representation $\rho|\det|^{(1-l)/2}\times\cdots\times\rho|\det|^{(l-1)/2}$ of $G$ has a unique irreducible subrepresentation $\St_l(\rho)$.
This is essentially square-integrable.
Conversely, any irreducible essentially square-integrable representations of $G$ can be written in this form uniquely.
\end{prop}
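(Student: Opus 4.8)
The plan is to obtain this from the Bernstein--Zelevinsky theory of segments for $\GL$ over the field $E$. Write $\nu=|\det|_E$, and for an irreducible supercuspidal $\rho$ of $G_{n/l}$ let $\Delta$ denote the chain $(\rho\nu^{(1-l)/2},\rho\nu^{(3-l)/2},\ldots,\rho\nu^{(l-1)/2})$, so that the representation in the statement is the normalized parabolic induction $\rho\nu^{(1-l)/2}\times\cdots\times\rho\nu^{(l-1)/2}$ along $\Delta$. I would first establish existence and uniqueness of $\St_l(\rho)$, the only substantial input being the classical reducibility criterion for a product of two supercuspidals: $\rho_1\times\rho_2$ is reducible precisely when $\rho_2\cong\rho_1\nu^{\pm1}$, in which case it has length two with a unique irreducible submodule and a unique irreducible quotient, and otherwise $\rho_1\times\rho_2$ is irreducible and $\rho_1\times\rho_2\cong\rho_2\times\rho_1$. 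Granting this, an induction on $l$ using transitivity of parabolic induction, combined with the geometric lemma applied to the Jacquet module of $\rho\nu^{(1-l)/2}\times\cdots\times\rho\nu^{(l-1)/2}$ along the standard Levi $G_{n/l}\times\cdots\times G_{n/l}$, shows that the cuspidal datum $\rho\nu^{(1-l)/2}\otimes\cdots\otimes\rho\nu^{(l-1)/2}$ (in this order) occurs in that Jacquet module with multiplicity one; by Frobenius reciprocity there is therefore at most one irreducible submodule, and one shows further that its Jacquet module along that Levi is exactly the single datum $\rho\nu^{(1-l)/2}\otimes\cdots\otimes\rho\nu^{(l-1)/2}$. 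Call this submodule $\St_l(\rho)$.

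Next I would verify essential square-integrability by Casselman's criterion. Twisting by a suitable power of $|\det|$, we may assume the central character is unitary; it then suffices that for every proper standard parabolic $P=MN$ every exponent of the normalized Jacquet module $r_M(\St_l(\rho))$ lie strictly inside the cone spanned by the simple roots in $N$. By transitivity of the Jacquet functor this reduces to the computation above: along $G_{n/l}\times\cdots\times G_{n/l}$ the only exponent is the one read off $\Delta$ from left to right, whose partial sums are, up to the positive factor $n/l$, the numbers $j(j-l)/2$ for $1\leq j\leq l-1$, all strictly negative; hence the inequalities hold and $\St_l(\rho)$ is essentially square-integrable.

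For the converse, let $\pi$ be an irreducible essentially square-integrable representation of $G_n$, and twist it to be square-integrable. Its supercuspidal support is a well-defined multiset $\{\rho_1,\ldots,\rho_k\}$, and $\pi$ embeds into $\rho_{i_1}\times\cdots\times\rho_{i_k}$ for a suitable ordering. Running Casselman's criterion in reverse forces the support to be a single segment: whenever two of the $\rho_i$ fail to be \emph{linked}, one can, using commutativity of parabolic induction for unlinked supercuspidals together with a Jacquet-module computation, reorder so as to expose an exponent violating the square-integrability inequality; this successively forces that all $\rho_i$ act on a space of the same dimension $n/k$, that they are twists $\rho\nu^{b_i}$ of a single supercuspidal $\rho$ of $G_{n/k}$, and that $\{b_i\}=\{(1-k)/2,(3-k)/2,\ldots,(k-1)/2\}$. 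Setting $l=k$ we get $l\mid n$, and $\pi$ is an irreducible submodule of $\rho\nu^{(1-l)/2}\times\cdots\times\rho\nu^{(l-1)/2}$, hence $\pi\cong\St_l(\rho)$ by the uniqueness proved above; uniqueness of the pair $(\rho,l)$ is then immediate because $l$ and the isomorphism class of $\rho$ are read off the cuspidal support of $\pi$. I expect this converse step to be the main obstacle: passing from the analytic hypothesis of square-integrability to the combinatorial conclusion that the cuspidal support is a single segment requires careful bookkeeping of all the Jacquet modules through the geometric lemma together with the linkage calculus, whereas the remaining ingredients are either the standard reducibility criterion for products of two supercuspidals or short explicit computations with exponents.
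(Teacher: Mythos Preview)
The paper does not prove this proposition: the whole section is a summary of results quoted from Zelevinsky \cite{zelevinsky1980induced}, and this statement is the standard segment classification of essentially square-integrable representations. Your outline is exactly the Bernstein--Zelevinsky argument underlying that reference, so at the level of strategy there is nothing to compare.

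There is, however, a genuine error in your Casselman step, and it actually exposes a slip in the proposition as printed. With the paper's conventions (upper-triangular standard parabolics, normalized induction and Jacquet functor), the unique irreducible \emph{sub}representation of $\rho\nu^{(1-l)/2}\times\cdots\times\rho\nu^{(l-1)/2}$ (exponents increasing) is the Zelevinsky segment representation, not the generalized Steinberg; for $l=2$ and $\rho$ trivial it is the trivial representation of $\GL_2(E)$. Your own computation confirms this: you correctly find that this sub has Jacquet module $\rho\nu^{(1-l)/2}\otimes\cdots\otimes\rho\nu^{(l-1)/2}$ along $G_{n/l}\times\cdots\times G_{n/l}$, with partial exponent sums $(n/l)\,j(j-l)/2<0$ for $1\le j\le l-1$. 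But lying in the open cone spanned by the simple roots of the standard unipotent means \emph{positive} partial sums, so Casselman's criterion fails here rather than holds. The essentially square-integrable $\St_l(\rho)$ is the unique irreducible \emph{quotient} of that induced representation (equivalently the unique sub of $\rho\nu^{(l-1)/2}\times\cdots\times\rho\nu^{(1-l)/2}$); its Jacquet module is the reversed datum with positive partial sums, and that is precisely what the paper's subsequent Jacquet-module lemma records. Once you make this correction the rest of your sketch, including the converse via segment-forcing, goes through.
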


With the notation of the above proposition, let $\ul{\phi}$ be the representation of $WD_E$ corresponding to $\rho$.
Since $\rho$ is supercuspidal, the restriction of $\ul{\phi}$ to $\SL_2(\C)$ is the trivial representation.
We denote the restriction of $\ul{\phi}$ to $W_E$ again by $\ul{\phi}$.
Let $\Sp(l)$ be the unique irreducible algebraic $l$-dimensional representation of $\SL_2(\C)$.
Then, the representation of $WD_E$ corresponding to $\St_l(\rho)$ is $\ul{\phi}\boxtimes\Sp(l)$.

A Jacquet module of $\St_l(\rho)$ is given as follows:

\begin{lem}\label{Jacquet}
Let $l$ be a divisor of $n$ and $\rho$ be an irreducible supercuspidal representation of $G_{n/l}$.
Set $\pi=\St_l(\rho)$ and denote by $V$ the space of $\pi$.
Let $\la=(n_1, \ldots, n_r)$ be a partition of $n$ and set $P=P_{\la}$.
Then, $V_P\neq0$ ifa nd only if there is a non-negative integer $k_i$ for each $1\leq i\leq r$ satisfying $n_i=k_i\cdot n/l$.
If this is the case, we have
\begin{equation}\label{Jac}
\pi_P\cong\St_{k_1}(\rho|\det|^{(l-k_1)/2})\boxtimes\St_{k_2}(\rho|\det|^{-k_1+(l-k_2)/2})\boxtimes\cdots\boxtimes\St_{k_r}(\rho|\det|^{(k_r-l)/2}).
\end{equation}
\end{lem}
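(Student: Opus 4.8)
The plan is to compute $\pi_P$ by first reading off the supercuspidal support of $\St_l(\rho)$ — which forces the divisibility condition — and then reducing to the case of a maximal parabolic, where the answer is dictated by Zelevinsky's description of the Jacquet module of a segment representation. Concretely: from the realization of $\pi=\St_l(\rho)$ as a submodule of $\rho|\det|^{(1-l)/2}\times\cdots\times\rho|\det|^{(l-1)/2}$ in the previous Proposition, the supercuspidal support of $\pi$ is the multiset
\[
\{\rho|\det|^{(1-l)/2},\ \rho|\det|^{(3-l)/2},\ \ldots,\ \rho|\det|^{(l-1)/2}\}
\]
of $l$ irreducible supercuspidal representations of $G_{n/l}$. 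Since induction and the Jacquet functor preserve the supercuspidal support, any irreducible subquotient $\tau_1\boxtimes\cdots\boxtimes\tau_r$ of $\pi_P$ has supercuspidal support obtained by distributing these $l$ representations among the blocks $G_{n_1},\ldots,G_{n_r}$; in particular each $n_i$ must be a multiple of $n/l$. This gives the ``only if'' direction, and once the isomorphism (\ref{Jac}) is established its right-hand side is manifestly a nonzero representation when $n_i=k_i\cdot n/l$ for all $i$, so $\pi_P\neq0$; it thus remains to prove (\ref{Jac}).

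Next I would reduce to the case $r=2$. By transitivity of the normalized Jacquet functor, if $\mu=(n_1,\ n-n_1)$ then $\pi_{P_\lambda}$ is computed by first passing to $\pi_{P_\mu}$ and then applying the Jacquet functor of $M_\mu$ relative to the corresponding standard parabolic, so iterating this peels off one block at a time. Granting the $r=2$ formula, an induction on $r$ — applied to the successive essentially square-integrable factors $\St_{k_i}(\rho|\det|^{\bullet})$ sitting on the trailing blocks — yields (\ref{Jac}) in general, the accumulation of $|\det|$-twists along the way producing exactly the exponents displayed.

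The heart of the matter is the maximal parabolic case. Write $l=k_1+k_2$ and $a=k_1\cdot n/l$. The list $\Delta=(\rho|\det|^{(1-l)/2},\ldots,\rho|\det|^{(l-1)/2})$ attached to $\St_l(\rho)$ splits uniquely into its initial block $\Delta_2$ of length $k_2$ and its final block $\Delta_1$ of length $k_1$; inspecting the centres, $\Delta_1$ is the list attached to $\St_{k_1}(\rho|\det|^{(l-k_1)/2})$ and $\Delta_2$ the list attached to $\St_{k_2}(\rho|\det|^{-k_1/2})$. Zelevinsky's computation of the Jacquet module of a segment representation along a maximal parabolic (see \cite{zelevinsky1980induced}) then gives
\[
(\St_l(\rho))_{P_{(a,\,n-a)}}\cong\St_{k_1}(\rho|\det|^{(l-k_1)/2})\boxtimes\St_{k_2}(\rho|\det|^{-k_1/2}),
\]
a single irreducible representation, the $\delta_P^{1/2}$-normalization built into the Jacquet functor being responsible for this clean shape; this is exactly (\ref{Jac}) for $r=2$. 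For a self-contained argument one instead applies the geometric lemma of Bernstein--Zelevinsky to the Jacquet module along $P_{(a,n-a)}$ of the induced representation $\rho|\det|^{(1-l)/2}\times\cdots\times\rho|\det|^{(l-1)/2}$ — a direct sum indexed by the ways of distributing its $l$ supercuspidal constituents between the two blocks — and then singles out the one summand in which each block carries a connected sub-segment: this is forced because $\St_l(\rho)$ is essentially square-integrable, so Casselman's criterion makes the exponents of its Jacquet module strictly positive on each factor, while a Frobenius-reciprocity argument checks that this summand genuinely occurs in $(\St_l(\rho))_{P_{(a,n-a)}}$ and not merely in the ambient induced module.

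The main obstacle is precisely this last point: showing that $(\St_l(\rho))_{P_{(a,n-a)}}$ \emph{equals} the single irreducible representation $\St_{k_1}(\rho|\det|^{(l-k_1)/2})\boxtimes\St_{k_2}(\rho|\det|^{-k_1/2})$, rather than merely containing it as a subquotient — that is, the irreducibility of the Jacquet module of the essentially square-integrable representation $\St_l(\rho)$. Everything else is supercuspidal-support bookkeeping and an induction on $r$; the non-formal input is Zelevinsky's segment theory, or equivalently Casselman's square-integrability criterion together with the geometric lemma.
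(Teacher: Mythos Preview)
The paper does not prove this lemma: Section~4 is explicitly a summary of Zelevinsky's classification, and the lemma is stated there without argument as a standard consequence of \cite{zelevinsky1980induced}. Your derivation is correct and is precisely how one extracts this statement from Zelevinsky's segment calculus --- the supercuspidal-support argument for the divisibility condition, the transitivity reduction to a maximal parabolic, and the two-block computation are the standard steps, and you correctly identify the only non-formal input (irreducibility of the Jacquet module of $\St_l(\rho)$) as coming from Zelevinsky, or equivalently from Casselman's criterion applied at the minimal relevant Levi $\prod G_{n/l}$ together with Frobenius reciprocity.
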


By Langlands classification, all irreducible representations of reductive groups appear in the composition factors of parabolically induced representations from essentially square-integrable representations.
In the case at hand, more precise classification is known.

\begin{thm}\label{thm:Zelevinskyclassification}
Let $\la=(n_1, \ldots, n_r)$ be a partition of $n$ and $\Delta_i$ be an irreducible essentially square-integrable representation of $G_{n_i}$.
We write the absolute value of the central character of each $\Delta_i$ by $|\det|^{t_i}$ with real number $t_i$.
Reordering $\Delta_i$'s so that we have
\begin{equation}\label{eq:Langlandsquot}
t_1/n_1\geq  t_2/n_2\geq\cdots\geq t_r/n_r,
\end{equation}
the representation $\Delta_1\times\cdots\times\Delta_r$ has a unique irreducible quotient that we denote as $\Delta_1\boxplus\cdots\boxplus\Delta_r$.
This representation is independent of the reordering which satisfies (\ref{eq:Langlandsquot}) and any irreducible representations of $G$ can be expressed in this form.
Moreover, this expression is unique up to permutation.
\end{thm}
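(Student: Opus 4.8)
The plan is to reduce the theorem to the general Langlands classification of irreducible admissible representations of $p$-adic reductive groups, using the combinatorics of Zelevinsky segments to control the factors of equal exponent and the Bernstein--Zelevinsky description of the tempered dual of $\GL$ to identify the building blocks. First I would recall, from the classification of essentially square-integrable representations stated above, that such a $\Delta_i$ is attached to a segment of supercuspidal twists $\rho|\det|^a,\rho|\det|^{a+1},\dots,\rho|\det|^b$, and that the quantity $t_i/n_i$ appearing in \eqref{eq:Langlandsquot} is exactly the midpoint $(a+b)/2$ of that segment; call it the central exponent $c_i$ of $\Delta_i$. Thus the datum $(\la,\{\Delta_i\})$ is nothing but a multiset of segments, labelled by their central exponents.

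Next I would group the factors by central exponent. Let $c^{(1)}>c^{(2)}>\dots>c^{(p)}$ be the distinct values among $c_1,\dots,c_r$, and for each $k$ let $\sigma_k=\prod_{c_i=c^{(k)}}\Delta_i$. Two segments with the same midpoint are either built on non-isomorphic supercuspidal lines, or are concentric --- hence nested or disjoint --- and in either case are unlinked; by Zelevinsky's irreducibility criterion each $\sigma_k$ is therefore irreducible, and writing $\sigma_k=\tau_k|\det|^{c^{(k)}}$ realises $\tau_k$ as an irreducible product of unitary discrete series, that is, an irreducible tempered representation of a Levi subgroup. Consequently $\Delta_1\times\cdots\times\Delta_r\cong\sigma_1\times\cdots\times\sigma_p$ is a genuine standard module whose exponent is strictly dominant, because $c^{(1)}>\dots>c^{(p)}$. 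The Langlands classification then produces the unique irreducible quotient $\Delta_1\boxplus\cdots\boxplus\Delta_r$ and asserts that the association class of the standard module is a complete invariant of its Langlands quotient. Independence of the reordering is immediate: two orderings satisfying \eqref{eq:Langlandsquot} differ only by permutations within the blocks on which $c_i$ is constant, and since each $\sigma_k$ is irreducible the regrouped standard module, hence its quotient, is unchanged.

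For surjectivity I would run this backwards: an arbitrary irreducible $\pi$ is, by the Langlands classification, the unique irreducible quotient of a standard module $\tau_1|\det|^{c^{(1)}}\times\cdots\times\tau_p|\det|^{c^{(p)}}$ with each $\tau_k$ tempered and $c^{(1)}>\dots>c^{(p)}$; decomposing each $\tau_k$ into its unitary discrete series constituents (the tempered dual of $\GL$) and untwisting by $c^{(k)}$ exhibits $\pi$ as some $\Delta_1\boxplus\cdots\boxplus\Delta_r$ with the required ordering. Uniqueness up to permutation follows the same way: an isomorphism $\Delta_1\boxplus\cdots\boxplus\Delta_r\cong\Delta'_1\boxplus\cdots\boxplus\Delta'_{r'}$ forces the two standard modules to be associate, so $p=p'$, $c^{(k)}=c'^{(k)}$ and $\tau_k\cong\tau'_k$ for all $k$; then the uniqueness up to permutation of the discrete series constituents of a tempered representation of $\GL$ identifies the two multisets of segments.

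The main obstacle is the two $\GL$-specific inputs that make this argument go through: Zelevinsky's theorem that a product of pairwise unlinked essentially square-integrable representations is irreducible, which is what allows the equal-exponent blocks to be collapsed into honest tempered representations of Levi subgroups; and the Bernstein--Zelevinsky fact that every irreducible tempered representation of $\GL_n(E)$ is precisely such a product of unitary discrete series, unique up to permutation --- equivalently, that temperedness of a standard module is detected by all its segment midpoints being zero. Granting these, everything else is a formal consequence of the general Langlands classification, the only delicate point being the bookkeeping around equal exponents, which the regrouping device above is designed to handle.
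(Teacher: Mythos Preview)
The paper does not give its own proof of this theorem: it is stated as a known classification result, with attribution to Zelevinsky \cite{zelevinsky1980induced}, in a section whose declared purpose is to ``summarize some results on classification of irreducible representations of $G$ by Zelevinsky.'' So there is nothing to compare your argument against.

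That said, your sketch is a correct and standard reduction of the statement to the Langlands classification together with the two $\GL$-specific inputs you name (irreducibility of products of pairwise unlinked segments, and the description of the tempered dual of $\GL_n$ as irreducible products of unitary discrete series). The observation that two segments with the same midpoint are automatically unlinked --- either they live on different cuspidal lines, or they are concentric and hence one contains the other --- is exactly what makes the equal-exponent blocks collapse into irreducible tempered representations, and this is the only nontrivial bookkeeping point. If you were writing this up for real you would also want to cite where the tempered dual of $\GL_n$ is identified (e.g.\ Jacquet's result that irreducible tempered representations of $\GL_n$ are fully induced from discrete series, or the relevant part of \cite{zelevinsky1980induced}), since that is what you invoke both for surjectivity and for the uniqueness-up-to-permutation step.
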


Generic representations are classified in terms of the above classification as follows:

\begin{prop}
Let $\pi=\St_{l_1}(\rho_1)\boxplus\cdots\boxplus\St_{l_r}(\rho_r)$ be an irreducible representation of $G$.
Here,
\begin{itemize}
\item $\la=(n_1, \ldots, n_r)$ is a partition of $n$;
\item $l_i$ is a divisor of $n_i$;
\item $\rho_i$ is an irreducible supercuspidal representation of $G_{n_i/l_i}$.
\end{itemize}
Then, $\pi$ is generic if and only if there are no $i$, $j$ with $1\leq i\neq j\leq r$ and a positive integer $d$ satisfying the two conditions ($\heartsuit$1)--($\heartsuit$2):
\begin{itemize}\setlength{\itemsep}{3pt}
\item[($\heartsuit$1)] $\max\{1, l_i-l_j+1\}\leq d\leq l_i$ and $n_i/l_i=n_j/l_j$;
\item[($\heartsuit$2)] $\rho_i\cong\rho_j\otimes|\det|^{d+(l_j-l_i)/2}$.
\end{itemize}
If this is the case, $\pi\cong\St_{l_{w(1)}}(\rho_{w(1)})\times\cdots\times\St_{l_{w(r)}}(\rho_{w(r)})$ for any permutation $w\in\Sf_r$.
\end{prop}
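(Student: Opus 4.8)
The plan is to recast the conditions ($\heartsuit$1)--($\heartsuit$2) in the language of Zelevinsky's segments and then to appeal to the standard irreducibility and genericity criteria for parabolically induced representations of $\GL_n(E)$. Recall from Zelevinsky's parametrization that $\St_{l_i}(\rho_i)$ corresponds to the segment $\Delta_i=[\rho_i|\det|^{(1-l_i)/2},\,\rho_i|\det|^{(l_i-1)/2}]$, that is, to the set $\{\rho_i|\det|^{(1-l_i)/2},\rho_i|\det|^{(3-l_i)/2},\dots,\rho_i|\det|^{(l_i-1)/2}\}$ of supercuspidal representations of $G_{n_i/l_i}$; and recall that two segments $\Delta,\Delta'$ are \emph{linked} when $\Delta\cup\Delta'$ is again a segment while neither of $\Delta,\Delta'$ is contained in the other.

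The first and most delicate step is the combinatorial dictionary: for $i\ne j$, the existence of a positive integer $d$ with ($\heartsuit$1)--($\heartsuit$2) is equivalent to $\Delta_i$ and $\Delta_j$ being linked with $\Delta_i$ lying to the right of $\Delta_j$. Indeed, ($\heartsuit$1) forces $n_i/l_i=n_j/l_j$, so $\rho_i,\rho_j$ are representations of the same group and one may write $\rho_i\cong\rho_j|\det|^{c}$ with $c=d+(l_j-l_i)/2$ as in ($\heartsuit$2). Computing the exponents of $\Delta_i$ and $\Delta_j$ relative to $\rho_j$, one checks directly that $d\in\Z$ is equivalent to the two segments lying on a common line; that $d\ge\max\{1,l_i-l_j+1\}$ is equivalent to ``$\Delta_i$ extends $\Delta_j$ strictly to the right and $\Delta_j\not\subseteq\Delta_i$''; and that $d\le l_i$ is equivalent to ``$\Delta_i\cup\Delta_j$ has no gap'', hence is a segment. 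Thus the clause ``there exist $i\ne j$ and $d$ satisfying ($\heartsuit$1)--($\heartsuit$2)'' says exactly that two of the $\Delta_i$ are linked, and the proposition reduces to: $\pi=\Delta_1\boxplus\cdots\boxplus\Delta_r$ is generic if and only if $\Delta_1,\dots,\Delta_r$ are pairwise unlinked.

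For the ``if'' direction, assume the $\Delta_i$ are pairwise unlinked. Then $\Delta_1\times\cdots\times\Delta_r$ is irreducible by Zelevinsky's criterion \cite{zelevinsky1980induced}, hence coincides with its unique irreducible quotient $\pi$; applying this to every reordering---unlinkedness is permutation invariant and $\Delta\times\Delta'\cong\Delta'\times\Delta$ for unlinked $\Delta,\Delta'$---yields the final assertion. Since each $\St_{l_i}(\rho_i)$ is essentially square-integrable, hence generic, and parabolic induction preserves genericity on $\GL_n(E)$, the representation $\pi$ is generic.

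For the converse I would argue by contraposition. If some $\Delta_i,\Delta_j$ are linked, then after reordering the inducing data---which does not affect the composition factors---we may assume they are adjacent; $\Delta_i\times\Delta_j$ then has length two, with unique generic constituent $\Delta_{i\cup j}\times\Delta_{i\cap j}$ (the second factor absent when the segments are merely juxtaposed) and non-generic constituent the Langlands quotient $\Delta_i\boxplus\Delta_j$, and \lemref{Jacquet} makes this length-two structure explicit. Since the Whittaker functional is multiplicative under parabolic induction, $\dim\Hom_N(\Delta_1\times\cdots\times\Delta_r,\th)=1$, so $\Delta_1\times\cdots\times\Delta_r$ has a unique generic constituent; it is obtained by repeatedly replacing a linked pair of segments by their union and intersection until no two of the resulting segments are linked, and the resulting irreducible representation then carries a multiset of segments strictly different from $\{\Delta_1,\dots,\Delta_r\}$, so it is not isomorphic to $\pi$ by the uniqueness in \thmref{thm:Zelevinskyclassification}. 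Hence $\pi$, which occurs as a constituent of $\Delta_1\times\cdots\times\Delta_r$, is not the generic one, so $\Hom_N(\pi,\th)=0$ and $\pi$ is not generic. The main obstacle is the bookkeeping of the second paragraph---matching the bounds $\max\{1,l_i-l_j+1\}\le d\le l_i$ with the two defining clauses of ``linked'' under the normalization $\St_l(\rho)\leftrightarrow[\rho|\det|^{(1-l)/2},\rho|\det|^{(l-1)/2}]$---together with checking that the ``merge linked pairs'' procedure terminates and genuinely changes the segment data whenever two of the $\Delta_i$ are linked.
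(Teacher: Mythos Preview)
The paper does not actually prove this proposition: Section~4 is an expository summary of Zelevinsky's classification, and the statement is recorded there without proof, with an implicit reference to \cite{zelevinsky1980induced}. So there is no ``paper's own proof'' to compare against.

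That said, your argument is the standard one and is correct. The combinatorial dictionary you set up is right: writing $\rho_i\cong\rho_j|\det|^{c}$ with $c=d+(l_j-l_i)/2$, one computes $b_i-b_j=d$ and $a_i-a_j=d-(l_i-l_j)$ for the segment endpoints, so $d\ge1$ encodes ``$\Delta_i$ ends strictly to the right of $\Delta_j$'', $d\ge l_i-l_j+1$ encodes ``$\Delta_j\not\subset\Delta_i$'', and $d\le l_i$ encodes ``no gap''. Thus ($\heartsuit$1)--($\heartsuit$2) is exactly linkedness, and the rest is Zelevinsky's irreducibility criterion together with uniqueness and heredity of Whittaker models under parabolic induction.

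Two small points. First, your appeal to \lemref{Jacquet} for the length-two structure of $\Delta_i\times\Delta_j$ is a mis-citation: that lemma computes Jacquet modules of a single $\St_l(\rho)$, not the composition series of a product of two linked segments. The fact you need is Zelevinsky's own result (\cite[\S4]{zelevinsky1980induced}) that for linked $\Delta,\Delta'$ the induced representation has exactly the two constituents $\langle\Delta\cup\Delta'\rangle\times\langle\Delta\cap\Delta'\rangle$ and $\langle\Delta\rangle\boxplus\langle\Delta'\rangle$. Second, the sentence ``the resulting irreducible representation then carries a multiset of segments strictly different from $\{\Delta_1,\dots,\Delta_r\}$'' deserves one line of justification: a single merge produces a segment of length $|\Delta_i\cup\Delta_j|>\max(l_i,l_j)$, so the maximum segment length strictly increases and the multiset cannot return to the original one. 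With these cosmetic fixes the proof is complete.
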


\begin{rem}\label{rem:genericrep}
Let $\pi$ be an irreducible representation of $G$ and $\ul{\phi}$ be the corresponding representation of $WD_E$.
The above characterization of generic representations can be restated as follows: $\pi$ is generic if and only if $L(s, \Ad\circ\,\ul{\phi})$ is holomorphic at $s=1$.
Suppose that $\pi$ is generic and there is an $L$-parameter $\phi'$ of $G'$ such that $\tilde{\phi}=bc_{\chi}\circ\,\phi'$.
Combining the above characterization of generic representations with its analog for $G'$ (\S \ref{subsection:L-packet} (2)), one can see that $\Pi_{\phi'}$ is generic.
Another way to prove this is to use the equation (\cite[Theorem 5.3]{matringe2010distinguished}) 
\[
L(s, \Ad\circ\,\phi')=L(s, \As^+\circ\,\tilde{\phi})=L(s, \pi, \As^+)
\]
and the fact that $L(s, \pi, \As^+)$ is holomorphic at $s=1$ (\cite[Proposition 7.1]{anandavardhanan2017test}).
\end{rem}

\section{Classification of distinguished representations}
First, we recall the classification of $\GL_n(F)$-distinguished generic representations of $G$ by Matringe.
Note that this result also holds for odd $n$.

\begin{thm}[\cite{matringe2010distinguished}, Theorem 5.2]\label{thm:classificationGL-distinguished}
Let $\pi$ be an irreducible generic representation of $G$ and write $\pi=\Delta_1\boxplus\cdots\boxplus\Delta_r$ as in Theorem \ref{thm:Zelevinskyclassification}.
Here, $\la=(n_1, \ldots, n_r)$ is a partition of $n$ and each $\Delta_i$ is an irreducible essentially square-integrable representation of $G_{n_i}$.
Then, $\pi$ is $\GL_n(F)$-distinguished if and only if there are an integer $1\leq k\leq r/2$ and a reordering of $\Delta_i$'s satisfying
\begin{itemize}
\item[(1)] for all $1\leq i\leq k$, $\Delta_{2i}^c\cong\Delta_{2i-1}^{\vee}$;
\item[(2)] for all $2k+1\leq i\leq r$, $\Delta_i$ is $\GL_{n_i}(F)$-distinguished.
\end{itemize}
\end{thm}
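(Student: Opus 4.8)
The plan is to compute $\Hom_{\GL_n(F)}(\pi,\C)$ directly, using that for generic $\pi$ the standard module is irreducible, so $\pi\cong\Delta_1\times\cdots\times\Delta_r$ (the Proposition following Theorem~\ref{thm:Zelevinskyclassification}). Restricting to $\GL_n(F)$ and running the Bernstein--Zelevinsky geometric lemma, I would put a finite filtration on $\pi|_{\GL_n(F)}$ whose graded pieces are indexed by the double cosets $\GL_n(F)\bs\GL_n(E)/P_{\la}$. These admit a combinatorial parametrization parallel to the one recalled in Section~3 for $H'=\GL_m(D)$ --- the split analogue, due to Jacquet--Rallis and Flicker --- by symmetric integral matrices $S=(s_{i,j})$ with row sums $n_i$, but now \emph{without} the parity condition on $s_{i,i}$. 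Twisted Frobenius reciprocity identifies the contribution of the $S$-th graded piece to the $\Hom$ space with $\Hom$ of a Jacquet module of $\Delta_1\boxtimes\cdots\boxtimes\Delta_r$ against an explicit character of the symmetric subgroup $\prod_i\GL_{s_{i,i}}(F)\times\prod_{i<j}\GL_{s_{i,j}}(E)$ of the Levi $M_{\la}$, the character coming from the modulus factors computed exactly as in Proposition~\ref{prop:moduluscharacter}. So $\pi$ is $\GL_n(F)$-distinguished if and only if one of these finitely many $\Hom$-spaces is nonzero.

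For the ``if'' direction I would build the functional from two mechanisms. First: if $\Delta_{2i}^c\cong\Delta_{2i-1}^{\vee}$, equivalently $\Delta_{2i}\cong\Delta_{2i-1}^{c\vee}$, then $\Delta_{2i-1}\times\Delta_{2i}$ is $\GL(F)$-distinguished, because the relevant double coset restricts this representation (up to modulus) to the twisted-diagonal copy $\{(g,g^c)\}$ of $\GL_{n_{2i-1}}(E)$ in the Levi, where it becomes $\Delta_{2i-1}\otimes\Delta_{2i-1}^{\vee}$ and carries the canonical pairing. Second: if $\sigma$ and $\tau$ are $\GL(F)$-distinguished then so is $\sigma\times\tau$, via the open double coset, whose stabilizer contains a product of the two smaller general linear groups over $F$. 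Pairing up $(\Delta_{2i-1},\Delta_{2i})$ for $1\leq i\leq k$ by the first mechanism, using distinction of the $\Delta_i$ for $2k+1\leq i\leq r$ by hypothesis, and iterating the second, I obtain the required $\GL_n(F)$-invariant functional on $\Delta_1\times\cdots\times\Delta_r$.

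For the ``only if'' direction, which is the heart of the matter: from a nonzero functional, choose a matrix $S$ whose graded piece is nonzero. By Lemma~\ref{Jacquet}, every Jacquet module of $\Delta_i=\St_{l_i}(\rho_i)$ that occurs is an outer product of representations $\St_{k}(\rho_i|\det|^{*})$ with prescribed exponents, and the geometric lemma writes the relevant Jacquet module of $\Delta_1\boxtimes\cdots\boxtimes\Delta_r$ as a sum of outer products of such factors. Non-vanishing of the $S$-th $\Hom$-space forces, for each off-diagonal pair $i<j$ with $s_{i,j}>0$, a factor from $\Delta_i$ to be conjugate-dual to a factor from $\Delta_j$, and for each $i$ with $s_{i,i}>0$, a factor from $\Delta_i$ to be $\GL_{s_{i,i}}(F)$-distinguished. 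By the uniqueness of supercuspidal supports and exponents in the Zelevinsky classification (Theorem~\ref{thm:Zelevinskyclassification}), together with the known classification of $\GL(F)$-distinguished essentially square-integrable representations (which in particular forces $\rho_i$ to be conjugate-self-dual), such a pairing pins down the data completely. The decisive step is then a bookkeeping argument --- best organized as an induction on $r$, peeling off one matched pair or one distinguished segment at a time --- showing that the exponents forced by Lemma~\ref{Jacquet} are compatible with the modulus twists in the character only when $S$ is ``pure'': the off-diagonal part matches \emph{whole} segments $\Delta_{2i-1},\Delta_{2i}$ with $\Delta_{2i}^c\cong\Delta_{2i-1}^{\vee}$ and the diagonal part consists of \emph{whole} segments $\Delta_i$ that are themselves $\GL_{n_i}(F)$-distinguished, with no segment split across several blocks. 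Ruling out all the ``mixed'' matrices $S$ by these exponent and central-character incompatibilities is, I expect, where the real work and the main obstacle lie.
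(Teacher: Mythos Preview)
The paper does not give its own proof of this statement; it is quoted from \cite{matringe2010distinguished} as a known result. What the paper \emph{does} carry out, by the identical method, is the quaternionic analogue Theorem~\ref{thm:classificationH-distinguished}, and your outline is an accurate sketch of that argument specialized back to $H=\GL_n(F)$: the Mackey filtration of $\pi|_H$ indexed by symmetric integer matrices $S$ (indeed with no parity constraint on the diagonal in the split case), the reduction via Frobenius reciprocity and the modulus identity of Proposition~\ref{prop:moduluscharacter} to an $M_S^{\sigma_S'}$-distinction condition on the Jacquet module $\Delta_{P'_S}$, and then the induction on $r$ using Lemma~\ref{Jacquet} together with the genericity constraints $(\heartsuit1)$--$(\heartsuit2)$ to force each $\Delta_i$ into a single block of $S$. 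This is precisely how the unlabeled Lemma immediately preceding the proof of Theorem~\ref{thm:classificationH-distinguished} is argued, case by case.

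One step in your ``if'' direction is underspecified and your orbit labels are reversed. In the two-block situation the coset with stabilizer $\GL_{n_1}(F)\times\GL_{n_2}(F)$ is the \emph{closed} identity coset, while the coset with twisted-diagonal stabilizer $\{(g,g^c)\}\cong\GL_{n_{2i-1}}(E)$ is the \emph{open} one. For your product mechanism this is harmless: the $K_H$-integral you have in mind is exactly the closed-orbit construction the paper uses at the end of the proof of Theorem~\ref{thm:classificationH-distinguished}. For the paired block $\Delta_{2i-1}\times\Delta_{2i}$ with $\Delta_{2i}^c\cong\Delta_{2i-1}^\vee$, however, the canonical pairing you invoke lives on the open-orbit piece, which is a \emph{submodule} of the filtration, and a functional on a submodule need not extend. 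Both Matringe and the present paper close this gap via Blanc--Delorme \cite{blanc2008vecteurs}: deform to $\Delta_{2i-1}|\det|^s\times\Delta_{2i}|\det|^{-s}$, form the intertwining integral of the pairing over $L\backslash H$, which converges absolutely for $\mathrm{Re}(s)\gg0$ and continues meromorphically in $s$, and take the leading Laurent coefficient at $s=0$ as the invariant functional on $\tau_i$. With that in hand, the final assembly over $K_H$ goes through exactly as you say.
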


We have a similar classification for $H$-distinguished generic representations.

\begin{thm}\label{thm:classificationH-distinguished}
Let $\pi$ be an irreducible generic representation of $G$ and write $\pi=\Delta_1\boxplus\cdots\boxplus\Delta_r$ as in Theorem \ref{thm:Zelevinskyclassification}.
Here, $\la=(n_1, \ldots, n_r)$ is a partition of $n$ and each $\Delta_i$ is an irreducible essentially square-integrable representation of $G_{n_i}$.
Then, $\pi$ is $H$-distinguished if and only if there are an integer $1\leq k\leq r/2$ and a reordering of $\Delta_i$'s satisfying
\begin{itemize}
\item[(1)] for all $1\leq i\leq k$, $\Delta_{2i}^c\cong\Delta_{2i-1}^{\vee}$;
\item[(2)] for all $2k+1\leq i\leq r$, $n_i=2m_i$ is even and $\Delta_i$ is $H_{m_i}$-distinguished.
\end{itemize}
\end{thm}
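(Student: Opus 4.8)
The plan is to adapt the strategy Matringe uses for \thmref{thm:classificationGL-distinguished} to the symmetric pair $(G, H')$, working throughout with the conjugate subgroup $H'=x^{-1}Hx$ so that the double-coset geometry of \S 3.2 is available. For the ``if'' direction, suppose $\pi=\Delta_1\boxplus\cdots\boxplus\Delta_r$ with a reordering as in (1)--(2). Since $\pi$ is generic, it is isomorphic to the full induced representation $\Delta_1\times\cdots\times\Delta_r$ and we may permute the factors freely; group them as $(\Delta_1\times\Delta_2)\times\cdots\times(\Delta_{2k-1}\times\Delta_{2k})\times\Delta_{2k+1}\times\cdots\times\Delta_r$. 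For each pair with $\Delta_{2i}^c\cong\Delta_{2i-1}^\vee$, the representation $\Delta_{2i-1}\times\Delta_{2i}$ of $G_{n_{2i-1}+n_{2i}}$ is $\GL$-type distinguished by a block-diagonal copy of $H'$ (this is the ``linear period'' mechanism: a $\GL_{n_{2i-1}+n_{2i}}(E)$-representation of the form $\sigma\times(\sigma^c)^\vee$ carries an invariant functional for the relevant embedded inner-form subgroup, via a matrix-coefficient/Flicker-style integral). For the trailing factors one uses hypothesis (2) directly. One then assembles these into an $H'$-invariant functional on $\pi$ by the standard ``open orbit'' argument: the product of block subgroups sits inside $H'$ as the stabilizer of the open $P_\la$--$H'$ double coset (one identifies the relevant $S\in\Sc(\la)$ and checks $M_S^{\sigma'_S}\cong\prod H'_{t_i}\times\prod G_{s_{i,j}}$ matches the grouping), and \propref{prop:moduluscharacter} guarantees the modulus characters balance so that the product functional extends. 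Closedness of all non-open orbits, or a filtration argument on the induced space, then shows the functional is nonzero on $\pi$.

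For the ``only if'' direction, I would run the geometric lemma of Matringe in reverse. Given $\pi$ generic and $H'$-distinguished, realize $\pi=\Delta_1\times\cdots\times\Delta_r$ and apply the Bernstein--Zelevinsky filtration of this induced representation as an $H'$-module indexed by $S\in\Sc(\la)$. A nonzero element of $\Hom_{H'}(\pi,\C)$ must be supported on some orbit $Pw_SH'$, giving — via Frobenius reciprocity and \propref{prop:moduluscharacter} — a nonzero $M_S^{\sigma'_S}$-invariant (twisted by the appropriate half-modulus) functional on the Jacquet module $\pi_{P_S}$, which by the genericity of $\pi$ and the Leibniz-type formula for Jacquet modules of $\times$-products is a sum of tensor products of the $\Delta_i$. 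Matching the $\prod_i H'_{t_i}\times\prod_{i<j}G_{s_{i,j}}$ structure against the factors forces: each ``off-diagonal'' pair contributes a constraint $\Delta_j^c\cong\Delta_i^\vee$ (from $G_{s_{i,j}}$-invariance of a $\Delta_i\otimes\Delta_j$-type module), and each ``diagonal'' block forces some $n_i$ even with $\Delta_i$ itself $H'_{m_i}$-distinguished. Disjointness/irreducibility of the square-integrable pieces (cuspidal support considerations) shows at most one $S$ can support a functional and pins down the reordering, yielding (1)--(2). Here is where the multiplicity-one statement (the $H'$-analogue of Flicker's Proposition 11, already recorded in \S 3.3) is essential to rule out interference between orbits.

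The main obstacle is the ``only if'' direction, specifically controlling which double cosets $Pw_SH'$ can carry an invariant functional and extracting clean divisibility/self-duality constraints from the mixed $H'_{t_i}\times G_{s_{i,j}}$ structure of $M_S^{\sigma'_S}$. Two points need care: first, the combinatorics of $\Sc(\la)$ is more intricate here than in the split linear-period case because of the constraint $s_{i,i}\in2\Z$, so one must argue that a diagonal block of odd size $s_{i,i}$ cannot occur in a distinguished configuration (which is exactly what produces the parity requirement ``$n_i=2m_i$ even'' in (2)); second, one needs the base case $r$ small — in particular that an essentially square-integrable $\Delta$ with $\Delta^c\cong\Delta^\vee$ is either ``doubling-distinguished'' or itself $H_{m}$-distinguished, and not both in a way that over-counts — and this is precisely where the work of Beuzart-Plessis on $H$-invariant functionals for square-integrable representations and its Jacquet--Langlands compatibility, cited in the introduction, is invoked to handle the inner-form subgroup $H$ rather than $\GL_n(F)$. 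Granting those inputs, the rest is the standard Matringe machinery transported across the isomorphism $H'\cong\GL_m(D)$.
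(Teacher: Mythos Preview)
Your architecture is the paper's --- Matringe's filtration for ``only if'', an explicit functional for ``if'' --- but two of your claimed inputs are spurious. Neither multiplicity-one nor Beuzart--Plessis is used anywhere in this proof. The filtration argument only needs that \emph{some} subquotient $\mathcal F_w/\mathcal F_{w^+}$ carry a nonzero $H'$-functional; there is no ``interference between orbits'' to rule out and no uniqueness of $S$ is claimed. Beuzart--Plessis appears only in Section~6, where one wants to translate $H_{m_i}$-distinction into $\GL_{n_i}(F)$-distinction; the present theorem takes ``$\Delta_i$ is $H_{m_i}$-distinguished'' as a black-box hypothesis in (2), so no characterisation of which square-integrable representations are $H$-distinguished is required at this stage. (For the ``if'' direction the paper does not argue via open-orbit extension either: it builds the functional on each pair $\Delta_{2i-1}\times\Delta_{2i}$ by the Blanc--Delorme meromorphic-continuation integral, tensors with the given $\xi_i$ on the remaining blocks, and integrates over $K_H$ using $H=(\tilde P\cap H)K_H$.)

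The genuine gap is in your ``only if'' sketch. A nonzero $M_S^{\sigma'_S}$-functional on $\Delta_{P'_S}$ says nothing directly about the $\Delta_i$: it says that certain \emph{pieces} $\Delta_{i,j}$ of their Jacquet modules (of sizes $s_{i,j}$, in general strictly smaller than $n_i$) satisfy $\Delta_{i,i}$ $H'_{t_i}$-distinguished and $\Delta_{j,i}^c\cong\Delta_{i,j}^\vee$. So a diagonal block does \emph{not} force $\Delta_i$ itself distinguished, and an off-diagonal block does \emph{not} force $\Delta_j^c\cong\Delta_i^\vee$. The actual content --- the Lemma immediately preceding the theorem --- is an induction on $r$ (ordering by $l_1\le\cdots\le l_r$) that uses the genericity constraints ($\heartsuit$1)--($\heartsuit$2) repeatedly to show that the first row of $S$ must have a \emph{single} nonzero entry equal to the full $n_1$: either $s_{1,1}=n_1$ (then $\Delta_1=\Delta_{1,1}$ is $H'_{m_1}$-distinguished, and $n_1$ even comes from $s_{1,1}\in2\Z$) or $s_{1,i_0}=n_1$ for a unique $i_0>1$ (giving a conjugate-dual pair). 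One then strips and recurses, with a further case split according to whether $\Delta_1$ is conjugate self-dual. Your line ``matching the structure against the factors forces\ldots'' skips exactly this argument, which is the heart of the proof.
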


Comparing these two classification results, we obtain the next corollary.

\begin{cor}
An irreducible generic representation of $G$ is $\GL_n(F)$-distinguished if it is $H$-distinguished.
\end{cor}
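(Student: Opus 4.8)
The plan is to deduce the corollary directly by comparing \thmref{thm:classificationGL-distinguished} and \thmref{thm:classificationH-distinguished}, so the only genuine content is the implication ``$\Delta$ is $H_{m_i}$-distinguished $\Rightarrow$ $\Delta$ is $\GL_{n_i}(F)$-distinguished'' for an irreducible essentially square-integrable representation $\Delta$ of $G_{n_i}$ with $n_i=2m_i$. Assume $\pi=\Delta_1\boxplus\cdots\boxplus\Delta_r$ is $H$-distinguished. By \thmref{thm:classificationH-distinguished}, after reordering there is $1\leq k\leq r/2$ so that $\Delta_{2i}^c\cong\Delta_{2i-1}^\vee$ for $1\leq i\leq k$ and, for $2k+1\leq i\leq r$, each $n_i$ is even and $\Delta_i$ is $H_{m_i}$-distinguished. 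Condition (1) is already exactly condition (1) of \thmref{thm:classificationGL-distinguished}, so it suffices to check condition (2) there, namely that each of the $\Delta_i$ with $2k+1\leq i\leq r$ is $\GL_{n_i}(F)$-distinguished. Granting this, \thmref{thm:classificationGL-distinguished} applies with the same $k$ and the same reordering, and $\pi$ is $\GL_n(F)$-distinguished.

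So the crux is: an irreducible essentially square-integrable (equivalently, after an unramified twist, square-integrable) representation $\Delta$ of $\GL_{2m}(E)$ that is $\GL_m(D)$-distinguished is also $\GL_{2m}(F)$-distinguished. First I would reduce to the square-integrable case: $H_{m_i}$-distinction and $\GL_{n_i}(F)$-distinction each force the central character of $\Delta_i$ to be trivial on the relevant central subgroup, so the twisting parameter $t_i$ is constrained compatibly and we may assume $\Delta$ is square-integrable without loss of generality. Now invoke the theorem of Beuzart-Plessis quoted in the introduction: the dimension of the space of $\GL_m(D)$-invariant linear forms on a square-integrable representation of $\GL_{2m}(E)$ is preserved under the Jacquet--Langlands correspondence. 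Concretely, if $\Delta$ on $\GL_{2m}(E)$ corresponds under Jacquet--Langlands to a square-integrable representation $\Delta^{JL}$ of $\GL_m(D_E)$, then $\dim\Hom_{\GL_m(D)}(\Delta,\C)=\dim\Hom_{\GL_m(D)}(\Delta^{JL},\C)$, and the right-hand side, for $\GL_m(D)$ sitting inside $\GL_m(D_E)$, is the ``split'' relative situation whose nonvanishing is equivalent to $\GL_{2m}(F)$-distinction of $\Delta$ via the corresponding Jacquet--Langlands transfer on the $F$-side. Thus $H$-distinction of $\Delta$ is equivalent to (in particular implies) $\GL_{2m}(F)$-distinction of $\Delta$.

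The main obstacle is making the last step precise: one must match up the two incarnations of Beuzart-Plessis's Jacquet--Langlands invariance so that $\GL_m(D)$-distinction of $\Delta$ over $\GL_{2m}(E)$ is transported to $\GL_{2m}(F)$-distinction, keeping track of which quaternion algebra and which quadratic extension play which role, and ensuring the relevant Jacquet--Langlands correspondences are the right ones on both the $E$-side and the $F$-side. In practice this bookkeeping should already be packaged in \thmref{thm:classificationH-distinguished} (whose proof, per the introduction, uses exactly this result of Beuzart-Plessis), so for the corollary itself it is cleanest to argue purely formally: both \thmref{thm:classificationGL-distinguished} and \thmref{thm:classificationH-distinguished} have literally the same condition (1), and condition (2) of the $H$-version is strictly stronger than condition (2) of the $\GL_n$-version once one knows that $H_{m_i}$-distinction of the square-integrable constituent $\Delta_i$ entails its $\GL_{n_i}(F)$-distinction — which is precisely the $r=1$ case of the comparison and follows from the Jacquet--Langlands invariance above. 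Hence no extra work beyond the two theorems and the base case is needed, and the corollary follows.
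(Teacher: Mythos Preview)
Your overall approach matches the paper's exactly: the corollary is obtained by juxtaposing Theorems \ref{thm:classificationGL-distinguished} and \ref{thm:classificationH-distinguished}, and the only nontrivial input is the essentially square-integrable base case, which the paper records (slightly later, in the first lemma of Section 6) as the equivalence of $\GL_m(D)$-distinction and $\GL_n(F)$-distinction, citing \cite[Theorem 1]{beuzart2017distinguished} directly. Your first and last paragraphs capture this correctly.

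Your middle paragraph, however, is muddled and should be dropped. Since $E$ embeds in $D$, the algebra $D\otimes_F E$ is split, so ``$\GL_m(D_E)$'' is nothing but $\GL_{2m}(E)$; the Jacquet--Langlands transfer you invoke on the $E$-side is the identity, your displayed equality of $\Hom$-dimensions is a tautology, and the subsequent sentence about a ``split relative situation'' does not lead to $\GL_{2m}(F)$-distinction by the route you sketch. The correct reading of Beuzart-Plessis here is simpler: the two Galois pairs $(\GL_{2m}(E),\GL_m(D))$ and $(\GL_{2m}(E),\GL_{2m}(F))$ arise from the inner forms $\GL_m(D)$ and $\GL_{2m}$ over $F$, which share the same $E$-points, and the theorem gives $\dim\Hom_{\GL_m(D)}(\Delta,\C)=\dim\Hom_{\GL_{2m}(F)}(\Delta,\C)$ outright for square-integrable $\Delta$ --- no intermediate transfer is needed. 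You sense this difficulty and retreat to simply citing the result in your final paragraph, so the argument survives; but the detour through $D_E$ is incorrect as written. (The separate reduction from ``essentially square-integrable'' to ``square-integrable'' is also unnecessary: the paper's lemma and \cite{beuzart2017distinguished} are stated for the essentially square-integrable case.)
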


The next lemma is the key to the proof of Theorem \ref{thm:classificationH-distinguished}.

\begin{lem}
Let $\la=(n_1, \ldots, n_r)$ be a partition of $n$ and $\Delta_i$ an irreducible essentially square-integrable representation of $G_{n_i}$ for each $1\leq i\leq r$.
Set $\Delta=\Delta_1\boxtimes\cdots\boxtimes\Delta_r$ and regard is as a representation of $M_{\la}$.
Suppose that the Jacquet module $\Delta_{P'_S}$ is $M_{S}^{\sigma'_S}$-distinguished for some $S=(s_{i, j})\in\Sc(\la)$. 
Then, there are an integer $1\leq k\leq r/2$ and a reordering of $\Delta_i$'s satisfying (1) and (2) in Theorem \ref{thm:classificationH-distinguished}.
\end{lem}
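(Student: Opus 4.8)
The plan is to analyze the hypothesis $\Delta_{P'_S}$ is $M_S^{\sigma'_S}$-distinguished by unwinding the explicit description of $M_S^{\sigma'_S}$ recorded in Section~3.2, namely
\[
M_S^{\sigma'_S}\cong\prod_{i=1}^rH'_{t_i}\times\prod_{1\leq i<j\leq r}G_{s_{i, j}},
\]
together with the explicit shape of the Jacquet module $\Delta_{P'_S}$. First I would observe that since each $\Delta_i$ is irreducible and essentially square-integrable, $\Delta_i=\St_{l_i}(\rho_i)$ for some supercuspidal $\rho_i$, and the Jacquet module of $\Delta_i$ along the standard parabolic of $G_{n_i}$ cut out by the $i$-th row of $S$ decomposes as in Lemma~\ref{Jacquet}: it is nonzero only when $n_i/l_i$ divides each block size $s_{i,j}$ (with the diagonal block having size $2t_i$), and in that case it is an outer tensor product of (twists of) Steinberg representations built from $\rho_i$. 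So the hypothesis that $\Delta_{P'_S}$ is nonzero already forces strong divisibility constraints on $S$, and the hypothesis that it is distinguished by the product group above splits, by the disjointness of the factors, into two separate requirements: for each $i$, the diagonal block piece of $\Delta_{i,P'_S}$ — a Steinberg-type representation of $G_{2t_i}=G_{s_{i,i}}$ — must be $H'_{t_i}$-distinguished; and for each pair $i<j$ with $s_{i,j}\neq0$, the off-diagonal blocks contribute a representation of $G_{s_{i,j}}$ which, by the second defining condition of $M_S^{\sigma'_S}$ (the constraint $m_{i,j}=w_{s_{i,j}}m_{j,i}^c w_{s_{i,j}}^{-1}$ identifying this factor with a ``diagonal'' copy of $G_{s_{i,j}}$ acting on a pair of Jacquet-module pieces), forces the $\rho_j$-part of $\Delta_{j,P'_S}$ to be the conjugate-dual of the $\rho_i$-part of $\Delta_{i,P'_S}$, up to the appropriate unramified twist read off from the formula in Lemma~\ref{Jacquet}.

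Next I would convert these two local requirements into the combinatorial statement we want. The pairing condition is essentially a matching: index the nonzero off-diagonal entries $s_{i,j}$; whenever $s_{i,j}\neq0$ the computation above yields a relation of the form $\Delta_j^c\cong\Delta_i^\vee$ after tracking the unramified twists (the central-character exponents in Lemma~\ref{Jacquet} must cancel against the conjugation, which is exactly where the twist $|\det|^{(l_j-l_i)/2}$ and the shift by $d$ appear — note the structural parallel with conditions $(\heartsuit 1)$–$(\heartsuit 2)$). Meanwhile, for an index $i$ with $t_i\neq0$, i.e.\ $s_{i,i}\neq0$, the diagonal piece being a Steinberg-type representation of $G_{2t_i}$ that is $H'_{t_i}\cong\GL_{t_i}(D)$-distinguished forces, by the divisibility already in force, $n_i/l_i$ even and, after unwinding, that $\Delta_i$ itself is $H_{m_i}$-distinguished with $n_i=2m_i$. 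Then I would build the reordering: group together, as consecutive pairs $(\Delta_{2i-1},\Delta_{2i})$ for $1\le i\le k$, those $\Delta$'s linked by a nonzero off-diagonal $s_{i,j}$; this is consistent because $S$ is symmetric so the links come genuinely in pairs, giving a well-defined matching and hence an even number $2k$ of matched indices with $k\le r/2$. The remaining indices $i$ have $s_{i,j}=0$ for $j\neq i$, hence $s_{i,i}=n_i$, hence $t_i=m_i$ and $n_i=2m_i$ is even, and the diagonal-block analysis shows $\Delta_i$ is $H_{m_i}$-distinguished — exactly conditions (1) and (2) of Theorem~\ref{thm:classificationH-distinguished}.

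I expect the main obstacle to be the bookkeeping in the pairing step: verifying that the relation extracted from a nonzero off-diagonal block $s_{i,j}$ is precisely $\Delta_j^c\cong\Delta_i^\vee$ (and not some twisted variant), which requires carefully matching the unramified-twist exponents coming out of the Jacquet-module formula~\eqref{Jac} against the conjugation-and-duality built into the condition $m_{i,j}=w_{s_{i,j}}m_{j,i}^c w_{s_{i,j}}^{-1}$, and then checking that distinctness of the matched pairs is automatic — e.g.\ that one cannot have $s_{i,i}\neq 0$ and $s_{i,j}\neq 0$ simultaneously in a way that over-constrains a single $\Delta_i$ into being both paired and diagonal-distinguished (which, if it occurred, would need a separate argument reconciling the two, of the type used in the genericity Proposition). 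The modulus-character identity of Proposition~\ref{prop:moduluscharacter} will be needed to ensure the normalized Jacquet functor interacts correctly with distinction, so that ``$\Delta_{P'_S}$ is $M_S^{\sigma'_S}$-distinguished'' really does descend to the factorwise statements without stray modulus twists; I would invoke it at the outset when reducing to the block-by-block picture.
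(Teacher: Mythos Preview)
Your setup is correct—unwinding the factors of $M_S^{\sigma'_S}$ against the Jacquet module $\Delta_{P'_S}$ does give the two pointwise constraints you list: $\Delta_{i,i}$ is $H'_{t_i}$-distinguished for each $i$, and $\Delta_{j,i}^c\cong\Delta_{i,j}^\vee$ for each pair. But the argument breaks at the ``build the reordering'' step, and the problem is exactly the one you flag as a potential obstacle yet do not resolve.

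The relation $\Delta_{j,i}^c\cong\Delta_{i,j}^\vee$ concerns only the Steinberg \emph{pieces} $\Delta_{i,j}=\St_{k_{i,j}}(\rho_i|\det|^{\cdots})$ of the Jacquet module, not the full $\Delta_i$ and $\Delta_j$. Upgrading it to $\Delta_j^c\cong\Delta_i^\vee$ requires $k_{i,j}=l_i$ and $k_{j,i}=l_j$, i.e.\ that the $i$-th row of $S$ (and likewise the $j$-th) has a single nonzero entry. Nothing in your argument forces this: a priori a row of $S$ may have several nonzero entries, so the ``links'' $s_{i,j}\neq 0$ do not define a matching on $\{1,\ldots,r\}$—symmetry of $S$ only makes links undirected, not disjoint. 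For the same reason, an index with $s_{i,i}\neq 0$ need not have $s_{i,i}=n_i$, so the diagonal constraint does not immediately give $H_{m_i}$-distinction of $\Delta_i$ itself.

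This is precisely where the paper does real work: it runs an induction on $r$, after ordering so that $l_1\le\cdots\le l_r$, and at each stage uses the genericity conditions $(\heartsuit 1)$–$(\heartsuit 2)$ to force the first nonzero entry $s_{1,i_0}$ in row~1 to absorb all of $n_1$ (so $s_{1,i_0}=n_1$ and every other $s_{1,j}=0$). Only then does the block-level relation promote to a statement about $\Delta_1$ and $\Delta_{i_0}$, allowing one to peel off a pair (or a singleton in the diagonal case $i_0=1$) and invoke the inductive hypothesis. The ensuing case analysis—$i_0=1$; $i_0>1$ with $\Delta_1$ conjugate self-dual; $i_0>1$ with $\Delta_1$ not conjugate self-dual, itself splitting into subcases according to the first nonzero entry in row~2—is the substance of the proof and cannot be replaced by the combinatorics of $S$ alone. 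Your proposal mentions genericity only parenthetically, but it is the mechanism that collapses each row to a single block; without it the conclusion is simply false. (Minor point: Proposition~\ref{prop:moduluscharacter} is invoked in the reduction \emph{to} this lemma inside the proof of Theorem~\ref{thm:classificationH-distinguished}, not in the lemma itself—the hypothesis here already has the Jacquet module distinguished with no stray modulus twist.)
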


\begin{proof}
The proof is the same as \cite[Lemma 5.5]{matringe2010distinguished}.
We write each $\Delta_i$ in the form of $\St_{l_i}(\rho_i)$.
Here, $l_i$ is a divisor of $n_i$ and $\rho_i$ is an irreducible supercuspidal representation of $G_{n_i/l_i}$.
We may assume 
\begin{equation}\label{assumption}
l_1\leq l_2\leq \cdots\leq l_r.
\end{equation}
Since we have $\Delta_{P'_S}\neq\{0\}$, there exists a non-negative integer $k_{i, j}$ satisfying $s_{i, j}=k_{i, j}\cdot n_i/l_i$ for each $1\leq i, j\leq r$.
Hence we have 
\[
\Delta_{P'_S}\cong\bigotimes_{1\leq i, j\leq r}\Delta_{i, j}, \hspace{10pt} \Delta_{i, j}=\rm{St}_{k_{i, j}}(\rho_i|\det|^{-k_{i, 1}-\cdots-k_{i, j-1}+(l_i-k_{i, j})/2})
\]
and
\begin{itemize}
\item for any $1\leq i\leq r$, $\Delta_{i, i}$ is $H'_{t_i}$-distinguished;
\item for any $1\leq i, j\leq r$, $\Delta_{j, i}^c\cong\Delta_{i, j}^{\vee}$.
\end{itemize}

By induction on $r$, we show that there are an integer $1\leq k\leq r/2$ and reordering of $\Delta_i$'s which satisfy the conditions (1) and (2) in Theorem \ref{thm:classificationH-distinguished}.
Set $i_0=\min\{1\leq i\leq r\,;\, s_{1, i}\neq 0\}$.

\ul{Case 1.}\hspace{3pt} If $i_0=1$, $\Delta_{1, 1}$ is conjugate self-dual since it is $H'_{t_1}$-distinguished.
Thus we get $\rho_1^{\vee}\cong\rho_1^c|\det|^{l_1-k_{1, 1}}$.
Therefore, $\Delta_1^{\vee}$ is isomorphic to $\St_{l_1}(\rho_1^c|\det|^{l_1-k_{1, 1}})$.
Take $1\leq j\leq r$ with $\Delta_1^{\vee}\cong\Delta_j^c$.
Then, $n_1=n_j$, $l_1=l_j$ and $\rho_1$ is isomorphic to $\rho_j|\det|^{k_{1, 1}-l_1}$.
From ($\heartsuit$1) and($\heartsuit$2), one can see that $k_{1, 1}=l_1$, \textit{i.e.} $s_{1, 1}=n_1$ and $s_{1, j}=0$ for any $1<j\leq r$.
Hence we have $\Delta_1=\Delta_{1, 1}$ and this is $H'_{m_1}$-distinguished.
By the induction hypothesis, there is a reordering of $\Delta_2, \ldots, \Delta_r$ which satisfies two conditions of Theorem \ref{thm:classificationH-distinguished} for suitable $1\leq k\leq (r-1)/2$.

\ul{Case 2.}\hspace{3pt} If $i_0>1$, note that we have $k_{1, j}=0$ for any $1\leq j\leq i_0$.
Since $\Delta_{1, i_0}^c\cong\St_{k_{1, i_0}}(\rho_1^c|\det|^{(l_1-k_{1, i_0})/2})$ is isomorphic to $\Delta_{i_0, 1}^{\vee}\cong\St_{k_{i_0, 1}}(\rho_{i_0}^{\vee}|\det|^{(k_{i_0, 1}-l_{i_0})/2})$, we see that $n_1/l_1=n_{i_0}/l_{i_0}$ and $\rho_{i_0}^{\vee}\cong\rho_1^c|\det|^{(l_1+l_{i_0})/2-k_{1, i_0}}$.
Therefore, we obtain
\[
\Delta_{i_0}^{\vee}\cong\rm{St}_{l_{i_0}}(\rho_1^c|\det|^{(l_1+l_{i_0})/2-k_{1, i_0}}).
\]
Take $1\leq j\leq r$ so that $\Delta_{i_0}^{\vee}\cong\Delta_j^c$.
Then we have $n_j/l_j=n_{i_0}/l_{i_0}=n_1/l_1$ and $\rho_1\cong\rho_j|\det|^{k_{1, i_0}-(l_1+l_{i_0})/2}$.
Moreover, $n_j=n_{i_0}$ implies $l_j=l_{i_0}$.
Since we have $l_j-k_{1, i_0}<l_j$, the conditions ($\heartsuit$1), ($\heartsuit$2) and the assumption (\ref{assumption}) imply 
\[
0\geq l_j-k_{1, i_0}=l_{i_0}-k_{1, i_0}\geq l_1-k_{1, i_0}.
\]
Since $l_1\geq k_{1, i_0}$, we obtain $l_1=k_{1, i_0}$, \textit{i.e.} $s_{1, i_0}=n_1$ and for any $1\leq j\neq i_0\leq r$, we see that $s_{1, j}=0$.
Hence $\Delta_{1}=\Delta_{1, i_0}$.

We have two cases: $\Delta_1$ is conjugate self-dual (Case 2-a) or $\Delta_1$ is not conjugate self-dual (Case 2-b).

\ul{Case 2-a.}\hspace{3pt} If $\Delta_1=\Delta_{1, i_0}$ is conjugate self-dual, 
$\Delta_{1, i_0}^{\vee}\cong\Delta_{i_0, 1}^c$ is conjugate self-dual.
Hence we get $\rho_{i_0}^{\vee}\cong\rho_{i_0}^c|\det|^{l_{i_0}-l_1}$.
Thus, $\Delta_{i_0}^{\vee}$ is isomorphic to $\rm{St}_{l_{i_0}}(\rho_{i_0}^c|\det|^{l_{i_0}-l_1})$.
Take $1\leq j\leq r$ so that $\Delta_{i_0}^{\vee}$ is isomorphic to $\Delta_j^c$.
Then we have $n_{i_0}/l_{i_0}=n_j/l_j$ and $\rho_{j}\cong\rho_{i_0}|\det|^{l_{i_0}-l_1}=\rho_{i_0}|\det|^{l_j-l_1}$.
The conditions ($\heartsuit$1) and ($\heartsuit$2) imply $l_j=l_1$.
Hence we have $l_{i_0}=l_1=k_{1, i_0}=k_{i_0, 1}$, \textit{i.e.} $n_{i_0}=s_{i_0, 1}$ and $s_{i_0, j}=0$ for any $1<j\leq r$.
Therefore, one can see that $\Delta_{i_0}=\Delta_{i_0, 1}\cong\Delta_{1, i_0}^{\vee}\cong\Delta_1$.
By the induction hypothesis, there is a reordering of $\Delta_2, \ldots, \Delta_{i_0-1}, \Delta_{i_0+1}, \ldots, \Delta_r$ which satisfies two conditions of Theorem \ref{thm:classificationH-distinguished} for some $1\leq k\leq r/2-1$.

\ul{Case 2-b.}\hspace{3pt} If $\Delta_1=\Delta_{1, i_0}$ is not conjugate self-dual, $\Delta_2^c\cong\Delta_1^{\vee}$.
Note that $n_1=n_2$ and $l_1=l_2$.
Set $j_0=\min\{1\leq j\leq r\,;\, s_{2, j}\neq0\}$.
If $j_0=2$, one can obtain $\Delta_2=\Delta_{2, 2}$ by the similar arguments as in the Case 1.
Since $\Delta_2$ is $H'_{m_2}$-distinguished, by the induction hypothesis, there is a reordering of $\Delta_1, \Delta_3, \ldots, \Delta_r$ which satisfies two conditions of Theorem \ref{thm:classificationH-distinguished} for some $1\leq k\leq (r-1)/2$.
Hereafter, we assume that $j_0\neq2$.

Suppose that $j_0>2$.
Since $\Delta_{2, j_0}^c=\rm{St}_{k_{2, j_0}}(\rho_2^c|\det|^{(l_2-k_{2, j_0})/2})$ is isomorphic to $\Delta_{j_0, 2}^{\vee}\cong\rm{St}_{k_{2, j_0}}(\rho_{j_0}^{\vee}|\det|^{k_{j_0, 1}+(k_{j_0, 2}-l_{j_0})/2})$, we obtain 
\[
\rho_{j_0}^{\vee}\cong\rho_2^c|\det|^{-k_{j_0, 1}-k_{j_0, 2}+(l_{j_0}+l_2)/2}.
\]

If $j_0=i_0$, we have $\rho_{i_0}^{\vee}\cong\rho_1^c|\det|^{(l_{i_0}-l_1)/2}$ and $\rho_{i_0}^{\vee}\cong\rho_2^c|\det|^{-k_{j_0, 2}+(l_{i_0}-l_1)/2}$ as we see above.
Then we get $\rho_2\cong\rho_1|\det|^{k_{j_0, 2}}$.
From ($\heartsuit$1) and ($\heartsuit$2), we see that $k_{j_0, 2}=0$, which contradicts the choice of $j_0$.
Thus, $j_0\neq i_0$.

Take $1\leq j\leq r$ so that $\Delta_{j_0}^{\vee}\cong\Delta_j^c$.
Since we have 
\[
\Delta_{j_0}^{\vee}\cong\rm{St}_{l_{j_0}}(\rho_2^c|\det|^{-k_{j_0, 2}+(l_{j_0}+l_2)/2}),
\]
we see that $n_j/l_j=n_{j_0}/l_{j_0}=n_2/l_2$ and $\rho_j=\rho_2|\det|^{-k_{j_0, 2}+(l_j+l_2)/2}$.
From the conditions ($\heartsuit$1), ($\heartsuit$2) and the assumption (\ref{assumption}), one can see that $l_2=k_{j_0, 2}$ as $l_j\geq l_j-k_{j_0, 2}\geq l_j-l_2\geq0$.
Hence $s_{2, j_0}=n_2$ and $s_{2, j}=0$ for any $1\leq j\neq j_0\leq r$.
Therefore we obtain $\Delta_2=\Delta_{2, j_0}$.

If $j_0=1$, we have $s_{1, 2}=s_{2, 1}\neq0$ and hence $i_0=2$.
As we have seen, $\Delta_1=\Delta_{1, 2}$.
This implies $\Delta_2^c\cong\Delta_{1, 2}^{\vee}\cong\Delta_{2, 1}^c$.
Hence we get $\Delta_2=\Delta_{2, 1}$.

In any case, we see that $s_{2, j}=0$ for all $1\leq j\neq j_0\leq r$, \textit{i.e.} $k_{2, j_0}=l_2=l_1$, $n_2=s_{2, j_0}$.
Since $\Delta_1^c\cong\Delta_{i_0, 1}^{\vee}\cong\rm{St}_{l_1}(\rho_{i_0}^{\vee}|\det|^{(l_1-l_{i_0})/2})$ is isomorphic to $\Delta_2^{\vee}\cong\Delta_{j_0, 2}^c\cong\rm{St}_{l_2}(\rho_{j_0}^c|\det|^{(l_{j_0}-l_2)/2})$, we get $\rho_{i_0}^{\vee}\cong\rho_{j_0}^c|\det|^{(l_{j_0}+l_{i_0})/2-l_1}$.
Take $1\leq j'\leq r$ so that $\Delta_{i_0}^\vee\cong\Delta_{j'}^c$.
Then we have $n_{i_0}/l_{i_0}=n_{j'}/l_{j'}$ and $\rho_{j'}\cong\rho_{j_0}|\det|^{(l_{j_0}+l_{j'})/2-l_1}$.
If $l_1<l_{j_0}$, ($\heartsuit$1) and ($\heartsuit$2) imply $l_{j'}=l_1$.
If this is the case, 
\[
\Delta_{i_0}^\vee\cong\Delta_{j'}^c\cong\St_{l_2}(\rho_{j_0}^c|\det|^{(l_{j_0}-l_2)/2})\cong\Delta_2^\vee.
\]
Hence we get $\Delta_{i_0}\cong\Delta_2$.
By the induction hypothesis, there is a reordering of $\Delta_2, \ldots, \Delta_{i_0-1}, \Delta_{i_0+1}, \ldots, \Delta_r$ which satisfies two conditions of Theorem \ref{thm:classificationH-distinguished} for some $1\leq k\leq (r-1)/2$.
If $l_1=l_{j_0}$, $\Delta_{j_0}\cong\Delta_1$ since we have $\rho_{j_0}^c\cong\rho_{i_0}^\vee|\det|^{(l_1-l_{i_0})/2}\cong\rho_1^c$.
By the induction hypothesis, there is a reordering of $\Delta_1, \Delta_3, \ldots, \Delta_{j_0-1},\Delta_{j_0+1},\ldots, \Delta_r$ which satisfies two conditions of Theorem \ref{thm:classificationH-distinguished} for some $1\leq k\leq (r-1)/2$.

This completes the proof of the lemma.
\end{proof}

\begin{proof}[Proof of Theorem \ref{thm:classificationH-distinguished}]
Suppose that $\pi$ is $H'$-distinguished.
We simply write $P_{\la}$ by $P$.
There is a total order $\geq$ on the set $W_{\la}=\{w_S\,;\, S\in\Sc(\la)\}$ which satisfies following condition:
\begin{quote}
For any $w\in W_{\la}$, 
\[
G_{\geq w}=\coprod_{\substack{w'\in W_{\la}\\ w'\geq w}}Pw'H'
\]
is open in $G$ and $PwH'$ is closed in $G_{\geq w}$.
\end{quote}

Let $W_i$ be the space of $\Delta_i$ for each $1\leq i\leq r$ and set $(\Delta, W)=(\Delta_1\boxtimes\cdots\boxtimes\Delta_r, W_1\otimes\cdots\otimes W_r)$.
For each $w\in W_{\la}$, let $\mF_w$ be the $H'$-submodule of $I_P^GW$ consisting of elements whose support is contained in $G_{\geq w}$.
Set $\mF_{w^+}=\sum_{w'>w}\mF_w$.
The map $\mF_w\rightarrow\ind_{w^{-1}Pw\cap H'}^{H'}(w^{-1}(\de_P^{-1/2}\Delta))$ given by sending $f\in\mF_w$ to the function $h\mapsto f(wh)$ on $H'$ induces an isomorphism of $H'$-modules
\[
\mF_w/\mF_{w^+}\xrightarrow{\sim}\ind_{w^{-1}Pw\cap H'}^{H'}(w^{-1}(\de_P^{1/2}\Delta)).
\]

Take a divisor $l_i$ of $n_i$ and an irreducible supercuspidal representation $\rho_i$ of $G_{n_i/l_i}$ so that we have $\Delta_i\cong\St_{l_i}(\rho_i)$. 
Since $\pi$ is $H'$-distinguished, there is a $w\in W_{\la}$ such that $\ind_{w^{-1}Pw\cap H'}^{H'}(w^{-1}(\de_P^{1/2}\Delta))$ has a nonzero $H'$-invariant linear form.
Let $S$ be the element of $\Sc(\la)$ with $w=w_S$.
Then, we have $P\cap wH'w^{-1}=P_S^{\sigma'_S}$.
By Frobenius reciprocity, we obtain
\[\renewcommand{\arraystretch}{1.3}
\begin{array}{rl}
\Hom_{H'}(\rm{ind}_{w^{-1}Pw\cap H'}^{H'}(w^{-1}(\de_P^{1/2}\Delta)), \1)&\cong\Hom_{wH'w^{-1}}(\rm{ind}_{P\cap wH'w^{-1}}^{wH'w^{-1}}(\de_P^{1/2}\Delta), \1) \\
&\cong\Hom_{P_S^{\sigma'_S}}(\de_{P_S^{\sigma'_S}}^{-1}\de_P^{1/2}\Delta, \1).
\end{array}\]
By Proposition \ref{prop:moduluscharacter}, we see that the restrictions of $\de_{P_S^{\sigma'_S}}^{-1}\de_P^{1/2}$ and $\de_{P'_S}^{-1/2}$ to $P_S^{\sigma'_S}$ are equal.
Hence we get $\Hom_{P_S^{\sigma'_S}}(\de_{P'_S}^{-1/2}\Delta, \1)\neq\{0\}$.
Since we have $M\cap N_S=N'_S\subset N_S^{\sigma'_S}N$, the Jacquet module $\Delta_{P'_S}$ is $M_S^{\sigma'_S}$-distinguished.
By the above lemma, there are an integer $1\leq k\leq r/2$ and a reordering of $\Delta_i$'s which satisfy the two conditions in the theorem.

Conversely, suppose that there are an integer $1\leq k\leq r/2$ and a reordering of $\Delta_i$'s  which satisfy the two conditions in the theorem.
For each $2k+1\leq i\leq r$, take a nonzero element $\xi_i$ of $\Hom_{H_{m_i}}(\Delta_i, \1)$.

For $1\leq i\leq k$, let $Q_i$ be the standard parabolic subgroup of $G_{2n_{2i-1}}$ corresponding to the partition $(n_{2i-1}, n_{2i-1})$.
Let $\tau_{i, s}=\Delta_{2i-1}|\det|^{s}\times\Delta_{2i}|\det|^{-s}$ be a representation of $G_{2n_{2i-1}}$ with a parameter $s\in\C$.
Since $\Delta_{2i}^c$ is isomorphic to $\Delta_{2i-1}^{\vee}$, there is a nonzero $G_{n_{2i-1}}$-invariant linear form $\ga_i$ on $\Delta_{2i-1}\otimes\Delta_{2i}^c$.
Take a flat section $v_{i, s}$ of $\tau_{i, s}$ and set $v_i=v_{i, 0}$.
Then, by \cite[Theorem 2.8, Theorem 2.26]{blanc2008vecteurs}, the integral
\[
\xi_i(v, s)=\int_{L\bs H'_{n_{2i-1}}}\ga_i(v_{i, s}(h)x_{n_{2i-1}})\,dh
\]
converges absolutely for $\rm{Re}(s)\gg0$ and has meromorphic continuation to whole $s$-plane.
Here, 
\[
L=H'_{n_{2i-1}}\cap Q_i=\{\d(A, A^c)\mid A\in\GL_{n_{2i-1}}(E)\}.
\]
Hence, this integral defines a nonzero element of $\rm{Hom}_{H_{n_{2i-1}}}(\tau_{i, s}, \1)$.
Let $\xi_{i}(v)$ be the leading coefficient of its Laurent expansion at $s=0$, the map $v\mapsto\xi_i(v)$ provides a nonzero element of $\Hom_{H_{n_{2i-1}}}(\tau_i, \1)$.

Let $\tilde{P}=P_{\tilde{\la}}$ be the standard parabolic subgroup of $G$ corresponding to the partition $\tilde{\la}=(2n_1, 2n_3, \ldots, 2n_{2k-1}, n_{2k+1}, \ldots, n_r)$ of $n$.
We define the representation $(\tilde{\Delta}, \tilde{W})$ of $\tilde{M}=M_{\tilde{\la}}$ by 
\[
\tilde{\Delta}=\tau_1\boxtimes\cdots\tau_{k}\boxtimes\Delta_{2k+1}\boxtimes\cdots\boxtimes\Delta_r, 
\]
so that we have $\pi=\rm{Ind}_{\tilde{P}}^G(\tilde{\Delta})$. 
Let $\tilde{\xi}$ be the element of $\Hom_{H\cap\tilde{M}}(\tilde{\Delta}, \1)$ given by 
\[
\tilde{\xi}=\xi_1\boxtimes\cdots\boxtimes\xi_k\boxtimes\xi_{2k+1}\boxtimes\cdots\boxtimes\xi_r.
\]
Recall that we have $(\tilde{P}\cap H)K_{H}=H$ since $\tilde{P}\cap H$ is a standard parabolic subgroup of $H$.
For an element $\phi$ of $\pi=\rm{Ind}_{\tilde{P}}^G(\tilde{\Delta})$, set
\[
\xi(\phi)=\int_{K_{H}}\tilde{\xi}(\phi(k))\,dk.
\]
Then, $\xi$ defines a nonzero element of $\Hom_{H}(\pi, \1)$.
Therefore, $\pi$ is $H$-distinguished.
\end{proof}

\section{Main theorem}
\begin{lem}
Let $\Delta$ be an irreducible essentially square-integrable representation of $G$.
Then, following conditions are equivalent:
\begin{itemize}
\item[(1)] $\Delta$ is $\GL_m(D)$-distinguished;
\item[(2)] $\Delta$ is $\GL_n(F)$-distinguished;
\item[(3)] $\Delta$ arises from unstable base change lift;
\item[(4)] $\Delta$ arises from unstable base change lift of an $L$-parameter of $G'$ which is generic with respect to any non-degenerate characters.
\end{itemize}
\end{lem}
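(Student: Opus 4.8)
The plan is to establish the three equivalences (1)$\Leftrightarrow$(2), (2)$\Leftrightarrow$(3) and (3)$\Leftrightarrow$(4), after a preliminary reduction to the case that $\Delta$ is square-integrable. For the reduction, note that each of (1)--(4) forces $\Delta$ to be conjugate self-dual: (1) and (2) do so because a $\GL_m(D)$- or $\GL_n(F)$-distinguished irreducible representation is conjugate self-dual (\cite{flicker1991distinguished}); (3) does so because, by Lemma \ref{lem:bclift}, it implies that $\ul\phi$ is conjugate-orthogonal, hence conjugate self-dual; and (4) implies (3). Writing $\Delta=\St_l(\rho)$ with $\rho$ an irreducible supercuspidal representation of $G_{n/l}$, the relation $\Delta^c\cong\Delta^\vee$ is equivalent to $\rho^c\cong\rho^\vee$; comparing central characters and using that the normalized absolute value of $E$ is invariant under $\mathrm{Gal}(E/F)$, this forces the central character of $\rho$ to be unitary, hence $\rho$ is unitary, and therefore $\Delta=\St_l(\rho)$ is square-integrable. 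From now on we assume $\Delta$ is square-integrable.

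The equivalence (1)$\Leftrightarrow$(2) is where the work of Beuzart-Plessis \cite{beuzart2017distinguished} enters. Since $E$ is a maximal subfield of $D$ it splits $D$, so $\GL_m(D)$ and $\GL_n=\GL_{2m}$ are inner forms of one another over $F$ which become isomorphic over $E$; thus $\GL_m(D)$ and $\GL_n(F)$ are the groups of $F$-points of the two Galois pairs attached to one and the same group $\GL_{2m}(E)$, and the Jacquet--Langlands correspondence between their $E$-points is the identity. For the square-integrable representation $\Delta$, Beuzart-Plessis's integral formula computes both $\dim\Hom_{\GL_m(D)}(\Delta,\C)$ and $\dim\Hom_{\GL_n(F)}(\Delta,\C)$, and his corollary on the invariance of this dimension under the Jacquet--Langlands correspondence shows that the two are equal; since each is at most $1$, (1) and (2) are equivalent. (The implication (1)$\Rightarrow$(2) also follows directly from the corollary comparing Theorems \ref{thm:classificationGL-distinguished} and \ref{thm:classificationH-distinguished}, applied to the generic representation $\Delta$.)

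For (2)$\Leftrightarrow$(3): by Lemma \ref{lem:bclift}, condition (3) is equivalent to $\ul\phi$ being conjugate-orthogonal. Writing again $\Delta=\St_l(\rho)$ so that $\ul\phi=\ul\rho\boxtimes\Sp(l)$, and recalling that $\Sp(l)$ is orthogonal for $l$ odd and symplectic for $l$ even, $\ul\phi$ is conjugate-orthogonal precisely when either $l$ is odd and $\ul\rho$ is conjugate-orthogonal, or $l$ is even and $\ul\rho$ is conjugate-symplectic; that is, when $\rho$ is $\GL_{n/l}(F)$-distinguished and $l$ is odd, or $\rho$ is $(\GL_{n/l}(F),\omega)$-distinguished and $l$ is even. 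This is exactly the known criterion for the essentially square-integrable representation $\St_l(\rho)$ to be $\GL_n(F)$-distinguished (see \cite{matringe2010distinguished}, \cite{anandavardhanan2005distinguished} and the references therein), so (2)$\Leftrightarrow$(3). Finally (4)$\Rightarrow$(3) is trivial, and for (3)$\Rightarrow$(4) let $\phi'$ be a parameter of $G'$ with $\tilde\phi=bc_\chi\circ\phi'$. Since $\Delta$ is square-integrable it is generic, so $\phi'$ is generic by Remark \ref{rem:genericrep}; moreover the character $\eta$ of $A_M$ is trivial, because $\ul\phi=\ul\rho\boxtimes\Sp(l)$ is irreducible and, being conjugate-orthogonal, constitutes the single summand with index in $I_+$ in the decomposition (\ref{eq:WDdecomp}), of even dimension $n=2m$. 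Hence $\phi'$ is generic with respect to any non-degenerate characters, which is (4).

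The main obstacle is the nontrivial direction (2)$\Rightarrow$(1): showing that a square-integrable $\GL_n(F)$-distinguished representation is $\GL_m(D)$-distinguished genuinely uses Beuzart-Plessis's integral formula for the multiplicity together with its invariance under the Jacquet--Langlands correspondence. All the remaining steps are either bookkeeping with Langlands parameters or direct invocations of Lemma \ref{lem:bclift} and Remark \ref{rem:genericrep}.
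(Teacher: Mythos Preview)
Your proof is correct and follows essentially the same route as the paper: (1)$\Leftrightarrow$(2) via Beuzart--Plessis, (2)$\Leftrightarrow$(3) by reducing to the conjugate-orthogonality of the parameter via the supercuspidal support, and (3)$\Rightarrow$(4) via Remark~\ref{rem:genericrep} together with the evenness of $n$. The only cosmetic differences are that you include an explicit preliminary reduction to the square-integrable case (the paper simply cites \cite{beuzart2017distinguished} directly), and for (2)$\Leftrightarrow$(3) you invoke the known distinction criterion for $\St_l(\rho)$ in terms of distinction of $\rho$, whereas the paper reaches the same conclusion by passing through the pole of the Asai $L$-function $L(s,\Delta,\As^+)$ at $s=0$ using \cite{matringe2009conjectures}.
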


\begin{proof}
Equivalence of (1) and (2) is \cite[Theorem 1]{beuzart2017distinguished}.
Let us prove equivalence of (2) and (3).
By \cite[Theorem 3.7]{matringe2009conjectures}, $\Delta$ is $\GL_n(F)$-distinguished if and only if $L(s, \Delta, \As^+)$ has a pole at $s=0$.
Take a divisor $l$ of $n$ and an irreducible supercuspidal representation $\rho$ of $G_{n/l}$ so that $\Delta=\St_l(\rho)$.
Let $\ul{\phi_{\rho}}$ (resp. $\ul{\phi}$) be the representation of $WD_E$ associated with $\rho$ (resp. $\Delta$).
We write the restriction of $\ul{\phi_{\rho}}$ to $W_E$ by the same symbol.
Then, we have $\ul{\phi}=\ul{\phi_{\rho}}\boxtimes\Sp(l)$.
By \cite[Proposition 4.1]{matringe2009conjectures}, we have
\[
L(s, \Delta, \As^+)=\prod_{k=0}^{l-1}L(s+k, \omega^{l-k-1}\otimes\rho, \As^+).
\]
Since $L(s, \omega\otimes\rho, \As^+)=L(s, \rho, \As^-)$, $L(s, \Delta, \As^+)$ has a pole at$s=0$ if and only if one of the following conditions holds:
\begin{itemize}
\item $l$ is odd and $\wt{\phi_{\rho}}$ fixes a non-degenerate vector of $\As^+$
\item $l$ is even and $\wt{\phi_{\rho}}$ fixes a non-degenerate vector of $\As^-$.
\end{itemize}
When $l$ is odd, $\Sp(l)$ is orthogonal and when $l$ is even, $\Sp(l)$ is symplectic.
Besides, $\wt{\phi_{\rho}}$ fixes a non-degenerate vector of $\As^+$ (resp. $\As^-$) if and only if $\ul{\phi_{\rho}}$ is conjugate-orthogonal (resp. conjugate-symplectic), see \cite[Proposition 7.5]{gan2011symplectic}.
Hence, $\Delta$ is $\GL_n(F)$-distinguished if and only if $\ul{\phi}$ is conjugate-orthogonal.
By Lemma \ref{lem:bclift}, we obtain equivalence of (2) and (3).

Finally, we show that (3) implies (4).
Take an $L$-parameter $\phi'$ of $G'$ with $\tilde{\phi}=bc_{\chi}\circ\,\phi'$.
By Remark \ref{rem:genericrep}, $\Pi_{\phi'}$ is generic.
Since $n$ is even, the quadratic character $\eta$ of $A_{\phi}$ is trivial.
Hence $\phi'$ is generic with respect to any non-degenerate characters.
\end{proof}

\begin{rem}\label{rem:sq-int}
By a similar argument as the proof of the above lemma, one can see that following conditions for an irreducible essentially square-integrable representation $\Delta$ of $G$ are equivalent:
\begin{itemize}
\item $\Delta$ is $\GL_n(F)$-distinguished (resp. $(\GL_n(F)), \omega$-distinguished);
\item the representation of $WD_E$ corresponding to $\ul{\phi}$ is conjugate-orthogonal (resp. conjugate-symplectic).
\end{itemize}
These equivalence hold even when $n$ is odd.
\end{rem}

\begin{thm}\label{thm:main}
Let $\pi$ be an irreducible generic representation of $G$.
Consider the following statements:
\begin{itemize}
\item[(A)] $\pi$ arises from unstable base change lift of an $L$-parameter of $G'$ which is generic with respect to any non-degenerate characters;
\item[(B)] $\pi$ is $H$-distinguished.
\end{itemize}
Then, (A) implies (B).
\end{thm}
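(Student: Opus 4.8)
The plan is to reduce statement (A) $\Rightarrow$ (B) to the combinatorial criterion of Theorem~\ref{thm:classificationH-distinguished}, by translating the hypothesis into a statement about the decomposition~\eqref{eq:WDdecomp} of the representation of $WD_E$ attached to $\pi$. So suppose $\tilde\phi=bc_\chi\circ\phi'$ with $\phi'$ generic with respect to any non-degenerate characters, and let $\ul\phi\colon WD_E\to\GL(M)$ be the representation attached to $\pi$. By Lemma~\ref{lem:bclift}, $\ul\phi$ is conjugate-orthogonal, so $M$ admits a decomposition~\eqref{eq:WDdecomp} with sign $b=+1$: here $I_+$ indexes the conjugate-orthogonal irreducible constituents, $I_-$ the conjugate-symplectic ones and $I_0$ those that are not conjugate self-dual, and the form induced on the multiplicity space $V_i$ is orthogonal for $i\in I_+$ and symplectic for $i\in I_-$. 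The assumption that $\phi'$ is generic with respect to any non-degenerate characters means, by \S\ref{subsection:L-packet}~(2) together with section~1.1, exactly that $\eta=\eta_M$ is trivial, i.e.\ that $\dim M_i$ is even for every $i\in I_+$. Since $\pi$ is generic I would write $\pi=\Delta_1\boxplus\cdots\boxplus\Delta_r$ as in Theorem~\ref{thm:Zelevinskyclassification}, with $\Delta_j=\St_{l_j}(\rho_j)$ a representation of $G_{n_j}$, so that $\ul\phi\cong\bigoplus_{j=1}^r\ul{M_j}$ with $\ul{M_j}=\ul{\phi_{\rho_j}}\boxtimes\Sp(l_j)$ an irreducible representation of $WD_E$ of dimension $n_j$; under local Langlands for $G$ the correspondence $\Delta_j\leftrightarrow\ul{M_j}$ is a bijection compatible with taking duals and with the Galois action.

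Next I would verify the criterion of Theorem~\ref{thm:classificationH-distinguished} by sorting the constituents $\ul{M_j}$ according to the trichotomy of~\eqref{eq:WDdecomp}. If $\ul{M_j}$ is not conjugate self-dual, it is paired in~\eqref{eq:WDdecomp} with another constituent occurring with the same multiplicity (the pairing being forced because $\ul\phi$ is conjugate self-dual); via local Langlands the partner corresponds to some $\Delta_{j'}$ with $\Delta_{j'}^c\cong\Delta_j^\vee$, and such indices group into pairs realizing condition~(1) of Theorem~\ref{thm:classificationH-distinguished}. If $\ul{M_j}$ is conjugate-symplectic, the symplectic form on its multiplicity space forces that space to be even-dimensional, so $\ul{M_j}$, hence $\Delta_j$, occurs with even multiplicity; since $\ul{M_j}$ is conjugate self-dual we have $\Delta_j^c\cong\Delta_j^\vee$, so these copies can be self-paired, again realizing condition~(1). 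Writing $2k$ for the number of $\Delta_j$ absorbed into such pairs, one gets $k\le r/2$; in the degenerate case $k=0$ (when every $\Delta_j$ is accounted for by the last step below), the conclusion follows directly from the $K_H$-integration construction used in the proof of Theorem~\ref{thm:classificationH-distinguished}.

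The remaining indices $j$ are exactly those with $\ul{M_j}$ conjugate-orthogonal, and for these I would check condition~(2). Since $\eta$ is trivial and $j\in I_+$, the integer $n_j=\dim\ul{M_j}$ is even; write $n_j=2m_j$. As $\ul{M_j}$ is conjugate-orthogonal, Remark~\ref{rem:sq-int} shows that the essentially square-integrable representation $\Delta_j$ is $\GL_{n_j}(F)$-distinguished, and then the equivalence of conditions (1) and (2) in the lemma preceding this theorem shows that $\Delta_j$ is $\GL_{m_j}(D)=H_{m_j}$-distinguished. Hence, after the reordering produced above, $\pi=\Delta_1\boxplus\cdots\boxplus\Delta_r$ satisfies the conditions of Theorem~\ref{thm:classificationH-distinguished}, and therefore $\pi$ is $H$-distinguished, which is (B).

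I expect the main point requiring care to be the dictionary between the $WD_E$-side decomposition~\eqref{eq:WDdecomp} and the Zelevinsky data, rather than any new deep input: one must check that conjugate-symplectic constituents occur with even multiplicity (so that they self-pair into condition~(1)) and, more importantly, that the triviality of $\eta$ forces every conjugate-orthogonal constituent to be even-dimensional, which is precisely what makes $H_{m_j}$ meaningful and lets the square-integrable case of the preceding lemma convert $\GL_{n_j}(F)$-distinction into $\GL_{m_j}(D)$-distinction. This parity point also explains why the converse fails in general: without the triviality of $\eta$ one may have odd-dimensional conjugate-orthogonal pieces, which remain $\GL_n(F)$-distinguished but cannot contribute an $H$-invariant form. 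The genuinely hard inputs — Theorem~\ref{thm:classificationH-distinguished}, which rests on the theorem of Beuzart-Plessis, and Lemma~\ref{lem:bclift} — are already available, so the remaining work is the bookkeeping above.
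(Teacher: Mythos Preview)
Your argument is correct and follows essentially the same route as the paper: use Lemma~\ref{lem:bclift} to get conjugate-orthogonality of $\ul\phi$, sort the essentially square-integrable factors $\Delta_j$ according to whether $\ul{M_j}$ is not conjugate self-dual, conjugate-symplectic (even multiplicity, self-pair), or conjugate-orthogonal (even-dimensional by triviality of $\eta$, hence $H_{m_j}$-distinguished via Beuzart-Plessis), and then apply Theorem~\ref{thm:classificationH-distinguished}. One cosmetic remark: Theorem~\ref{thm:classificationH-distinguished} itself does not rely on Beuzart-Plessis---that input enters only through the lemma immediately preceding the main theorem (the equivalence of $\GL_{n_j}(F)$- and $H_{m_j}$-distinction for essentially square-integrable $\Delta_j$), exactly as you use it.
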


\begin{proof}
Suppose that (A) holds.
Let $\la=(n_1, \ldots, n_r)$ be a partition of $n$ and $\Delta_i$ an irreducible essentially square-integrable representation of $G_{n_i}$ with $\pi=\Delta_1\times\cdots\times\Delta_r$.
Denote by $\ul{\phi}$ the representation of $WD_E$ corresponding to $\pi$.
By Lemma \ref{lem:bclift}, $\ul{\phi}$ is conjugate-orthogonal.
Hence, there is an integer $1\leq k'\leq k\leq r/2$ and a reordering of $\Delta_i$'s satisfying
\begin{itemize}
\item[(1)] for all $1\leq i\leq k'$, $\Delta_{2i-1}$ is not conjugate self-dual and $\Delta_{2i}^c\cong\Delta_{2i-1}^{\vee}$;
\item[(2)] for all $k'+1\leq i\leq k$, $\Delta_{2i-1}$ is $(\GL_{n_{2i-1}}(F), \omega)$-distinguished and $\Delta_{2i-1}\cong\Delta_{2i}$;
\item[(3)] for all $2k+1\leq i\leq r$, $\Delta_i$ is $\GL_{n_i}(F)$-distinguished.
\end{itemize}
Note that $\Delta_i$'s appearing in (1) (resp. (2), (3)) correspond to the irreducible factors of $\ul{\phi}$ which is not conjugate self-dual (resp. conjugate-symplectic, conjugate-orthogonal), cf.\,Remark \ref{rem:sq-int}.
Since the quadratic character $\eta$ of $A_{\phi}$ is trivial, $n_i=2m_i$ is even for all $2k+1\leq i\leq r$.
Hence by \cite[Theorem 1]{beuzart2017distinguished}, each $\Delta_i$ is $H_{m_i}$-distinguished for $2k+1\leq i\leq r$.
By Theorem \ref{thm:classificationH-distinguished}, $\pi$ is $H$-distinguished, \textit{i.e.} (B) holds.
\end{proof}

\begin{rem}
The converse direction (B)$\Rightarrow$(A) does not hold in general.
In fact, suppose $m$ is odd and $\Delta$ is a $\GL_m(F)$-distinguished irreducible essentially square-integrable representation of $G_m$.
Then, a representation $\pi=\Delta\times\Delta$ of $G_{2m}$ is $H_m$-distinguished and does not satisfy the condition (A).
\end{rem}

\textbf{Acknowledgement.}
I would like to thank Tamotsu Ikeda, Hiraku Atobe and Masao Oi for giving me many helpful comments.
I am also grateful for the anonymous referees for pointing out some inaccuracies in an earlier draft, especially the equation (\ref{Jac}) in Lemma \ref{Jacquet} and the numerous comments which greatly improve the manuscript.
This research does not receive any specific grant from funding agencies in the public, commercial, or not-for-profit sectors.

\end{document}